\newcommand{\rn}{\mathbb{R}^n}
\newcommand{\real}{\mathbb{R}}
\newcommand{\nat}{\mathbb{N}}
\newcommand{\no}{\mathbb{N}_0}
\newcommand{\zn}{\mathbb{Z}^n}
\newcommand{\supp}{\mathop{\mathrm{supp}\,}\nolimits}
\newcommand{\p}{p(\cdot)}
\newcommand{\q}{q(\cdot)}
\newcommand{\s}{s(\cdot)}
\newcommand{\Sn}{\mathscr{S}(\rn)}
\newcommand{\SSn}{\mathscr{S}'(\rn)}
\renewcommand{\P}{\mathcal{P}(\rn)}
\newcommand{\Plog}{\mathcal{P}^{\log}(\rn)}
\newcommand{\SeqF}{L^{\phi}_{p(\cdot)}(\ell_{q(\cdot)})}
\newcommand{\SeqB}{\ell^{\phi}_{q(\cdot)}(L_{p(\cdot)})}
\newcommand{\Lp}{L_{\p}(\rn)}
\newcommand{\ellqp}{{\ell_{q(\cdot)}(L_{p(\cdot)})}}
\newcommand{\ellpq}{{L_{p(\cdot)}(\ell_{q(\cdot)})}}
\newcommand{\Bphi}{B^{\bm{w}, \phi}_{\p,\q}(\rn)}
\newcommand{\Fphi}{F^{\bm{w}, \phi}_{\p,\q}(\rn)}
\newcommand{\bphi}{b^{\bm{w}, \phi}_{\p,\q}(\rn)}
\newcommand{\fphi}{f^{\bm{w}, \phi}_{\p,\q}(\rn)}
\DeclareMathOperator*{\esssup}{ess\,sup}
\DeclareMathOperator*{\essinf}{ess\,inf}
\newtheorem{definition}{Definition }[section]
\newtheorem{theorem}[definition]{Theorem}
\newtheorem{corollary}[definition]{Corollary}
\newtheorem{lemma}[definition]{Lemma}
\theoremstyle{remark}\newtheorem{remark}[definition]{Remark}
\newtheorem{remarks}[definition]{Remarks}
\title{Non-smooth atomic decomposition of variable 2-microlocal Besov-type and Triebel-Lizorkin-type spaces}
\author{Helena F. Gon\c{c}alves \footnote{Institute of Mathematics, Friedrich-Schiller-University Jena, 07737 Jena, Germany; {\tt helena.goncalves@uni-jena.de}; the author was supported by the German Research foundation (DFG), Grant no. Ha 2794/8-11.}}
\begin{document}
\maketitle
\begin{abstract}
	In this paper we provide non-smooth atomic decompositions  of 2-microlocal Besov-type and Triebel-Lizorkin-type spaces with variable exponents $\Bphi$ and $\Fphi$. Of big importance in general, and an essential tool here, are the characterizations of the spaces via maximal functions and local means, that we also present. 
	These spaces were recently introduced by Wu at al. and cover not only variable 2-microlocal Besov and Triebel-Lizorkin spaces $B^{\bm{w}}_{\p,\q}(\rn)$ and $F^{\bm{w}}_{\p,\q}(\rn)$, but also the more classical smoothness Morrey spaces  $B^{s, \tau}_{p,q}(\rn)$ and $F^{s,\tau}_{p,q}(\rn)$. Afterwards, we state a pointwise multipliers assertion for this scale.
\end{abstract}

\section{Introduction}
The introduction of function spaces with variable integrability, also known as variable exponent function spaces $\Lp$, goes back to Orlicz \cite{Or31} in 1931. However, only several decades later they were substantially studied, in the papers \cite{KR91} of Kov\'a\v{c}ik and R\'akosn\'ik, as well as \cite{ER00} of Edmunds and R\'akosn\'\i k and \cite{Die04} of Diening. The spaces $L_{\p}(\rn)$ have several applications, such as in fluid dynamics, image processing, PDEs and variational calculus. For an overview we refer to \cite{DHHR}.

The merger of the concepts of variable integrability and variable smoothness was done by Diening, H{\"a}st{\"o} and Roudenko in \cite{DHR09}, where the authors defined Triebel-Lizorkin spaces with variable exponents $F^{s(\cdot)}_{\p, \q}(\rn)$. The interplay between the three parameters $s, q$ and $p$ can, easily and interestingly, be verified on the trace theorem  on $\mathbb{R}^{n-1}$ proved by these authors \cite[Theorem~3.13]{DHR09}. This interaction is also clear on the Sobolev embedding results obtained for these spaces by Vyb\'iral in \cite{Vyb09}. 

For the Besov spaces it is not so easy to have also the parameter $q$ as a variable one. Almeida and H{\"a}st{\"o} introduced in \cite{AH10} Besov spaces $B_{\p,\q}^{s(\cdot)}(\rn)$ with all three indices variable, using for that a different modular which already uses the variable structure on $\q$. They proved the Sobolev and other usual embeddings in this scale.

A more general approach to spaces of variable smoothness are the so-called 2-microlocal function spaces, where the smoothness gets measured by a weight sequence $\bm{w}=(w_j)_{j\in \nat_0}$. Besov spaces with such weight sequences appeared first in the works of Peetre \cite{Pee75} and Bony \cite{Bony}. The variable 2-microlocal Besov and Triebel-Lizorkin spaces $B^{\bm{w}}_{\p,q}(\rn)$ and $F^{\bm{w}}_{\p,\q}(\rn)$ were introduced by Kempka in \cite{Kem09, Kem10}. Since then, several authors have devoted some attention to these spaces, expanding the knowledge about their properties. We mention \cite{AC16, AC16-1, GMN14, GK16, GK17, GKV17, Kem16, KV12, MNS13}. 

Function spaces with variable exponents represent a kind of approach that generalizes classical function spaces. However, there are different approaches, which also lead us to generalized Besov and Triebel-Lizorkin spaces. The Besov-type spaces ${B}_{p,q}^{s,\tau}(\rn)$ and the Triebel-Lizorkin-type spaces ${F}_{p,q}^{s,\tau}(\rn)$ are an example of that. They were introduced in \cite{YSY10} and, besides the classical Besov and Triebel-Lizorkin spaces, they also cover Triebel-Lizorkin-Morrey spaces introduced by Tang and Xu in \cite{TX05} and the hybrid functions spaces introduced and studied by Triebel in \cite{t13,t14}, together with their use in heat and Navier-Stokes equations.

Recently also these scales got new variable versions. In \cite{YZY15-B,YZY15-F}, the authors introduced Besov-type and Triebel-Lizorkin-type spaces with variable exponents $B_{\p,\q}^{\s,\phi}(\rn)$ and $F_{\p,\q}^{\s,\phi}(\rn)$, with $\phi$ being a measurable function on $\real^{n+1}_+$ which replaces the parameter $\tau$. Moreover, also the 2-microlocal versions $\Bphi$ and $\Fphi$ were already introduced in \cite{WYY18}. Among other aspects, the authors characterized the spaces by means of atomic decompositions and obtained a trace result on hyperplanes. These spaces provide an unified approach that covers variable 2-microlocal Besov and Triebel-Lizorkin spaces $B^{\bm{w}}_{\p,\q}(\rn)$ and $F^{\bm{w}}_{\p,\q}(\rn)$, variable Besov-type and Triebel-Lizorkin-type spaces $B_{\p,\q}^{\s,\phi}(\rn)$ and $F_{\p,\q}^{\s,\phi}(\rn)$, and hence all the spaces that are already covered by these.

In this paper we aim to derive a non-smooth atomic characterization for the spaces $\Bphi$ and $\Fphi$. An essential tool here is their characterization via local means, which follows immediately from the characterization by maximal functions. Although this characterization was already considered in \cite{WYY18}, now we prove a more general version, which is more in line with the results of this type present in the literature. 

As for the characterization via non-smooth atoms, recently in \cite{GM18} the authors proved a result for the spaces $F_{\p,\q}^{\s,\phi}(\rn)$, which was the first result on this subject for this type of function spaces, even for the case of constant exponents. Now we extend it to the 2-microlocal spaces $\Fphi$ and also complete this study by obtaining the counterpart for the Besov scale $\Bphi$. Covered by these results will be also the results obtained in \cite{GK16} for 2-microlocal variable Besov and Triebel-Lizorkin spaces. 

Implicit in the name of this characterization -- \textit{non-smooth} -- is the fact that we replace the usual (smooth) atoms by more general ones, in the sense that they have weaker assumptions on the smoothness. We then show that, also in this case, all the crucial information comparing to smooth atomic decompositions is kept. This modification appeared first in \cite{TW96}, where Triebel and Winkelvoß suggested the use of these more relaxed conditions to define classical Besov and Triebel-Lizorkin spaces intrinsically on domains. More recent is the work \cite{Sch13} of Scharf, where a non-smooth atomic characterization for $B^s_{p,q}(\rn)$ and $F^s_{p,q}(\rn)$ was derived, using even more general atoms. Here we follow this approach to prove our main result. Moreover, as an application, we provide an assertion on pointwise multipliers for the spaces $\Bphi$ and $\Fphi$.

\section{Notation and definitions}

We start by collecting some general notation used throughout the paper.

As usual,  we denote by $\nat$ the set of all natural numbers, $\nat_0=\mathbb N\cup\{0\}$, and 
$\rn$, $n\in\nat$,  the $n$-dimensional real Euclidean space with $|x|$, for $x\in\rn$, denoting the Euclidean norm of $x$. 
By $\zn$ we denote the lattice of all points in $\rn$ with integer components.
For $\beta:=(\beta_1,\cdots, \beta_n)\in \mathbb{Z}^n$, let $|\beta|:= |\beta_1|+\cdots+|\beta_n|$.
If $a,b\in\real$,  then $a\vee b:=\max \{a,b\}$.
We denote by  $c$ a generic positive constant which is independent of the main parameters, but its value may change from line to line. 
The expression $A\lesssim B$ means that $ A \leq c\,B$. If $A \lesssim
B$ and $B\lesssim A$, then we write $A \sim B$.  

Given two quasi-Banach spaces $X$ and $Y$, we write $X\hookrightarrow Y$ if $X\subset Y$ and the natural embedding is bounded.

If $E$ is a measurable  subset of $\rn$, we denote by $\chi_E$ its characteristic function and by  $|E|$ its Lebesgue measure. By $\supp f$ we denote the support of the function $f$.

For each cube $Q\subset \rn $ we denote its center by $c_Q$ and its side length by $ \ell(Q)$ and, for $a\in (0,\infty)$ we denote by $aQ$ the cube concentric with $Q$ having the side length $a\ell(Q)$. For $x\in\rn$ and $r \in (0, \infty)$, we denote by $Q(x, r)$ the cube centered at $x$ with side lenght $r$, whose sides are parallel to the axes of coordinates. 

Given $k\in\no$, $C^k(\rn)$ is  the space of all functions $f:\rn\rightarrow \mathbb{C}$ which are $k$-times continuously differentiable (continuous in $k=0$) such that 
$$
\|f\mid C^k(\rn)\|:=\sum_{|\alpha|\leq k} \sup_{x\in\rn}|D^{\alpha}f(x)|<\infty.
$$
 The H\"older space $ \mathscr{C}^s(\rn)$ with index $s>0$ is defined as the set of all functions $f\in C^{{\lfloor s \rfloor}^-}(\rn)$ with 
$$
\|f\mid  \mathscr{C}^s(\rn)\|:=\|f\mid C^{{\lfloor s \rfloor}^-}(\rn)\|+ \sum_{|\alpha|=\{s\}^+}\sup_{x,y\in\rn, x\neq y} \frac{|{D^{\alpha}}f(x)-{D^{\alpha}}f(y)|}{|x-y|^{\{s\}^+}}<\infty,
$$
where  ${\lfloor s \rfloor}^-\in\no$ and $\{s\}^+\in(0,1]$  are uniquely determined numbers so that $s={\lfloor s \rfloor}^-+\{s\}^+$.  If $s=0$ we set $\mathscr{C}^0(\rn):=L_{\infty}(\rn)$.

By $\Sn$ we denote  the usual Schwartz  class of all infinitely differentiable  rapidly decreasing complex-valued  functions on $\rn$ and $\SSn$ stands for the dual space of tempered distributions. The Fourier transform of $f\in \Sn$ 
or $f\in \SSn$ is denoted by $\widehat{f}$, while its inverse transform is denoted by $f^{\vee}$.

\medskip

Now we give a short survey on variable exponents. For a measurable function $p:\rn\rightarrow (0,\infty]$, let  
$$
p^-:= \essinf_{x\in \rn}p(x) \quad \text{and} \quad  p^+:= \esssup_{x\in \rn}p(x).
$$
In this paper we denote by  $\P$  the set  of all measurable functions $p:\rn\rightarrow (0,\infty ]$ (called variable exponents) which are essentially bounded away from zero. For $p\in \P$ and a measurable set $E\subset\rn$, the space $L_{p(\cdot)}(E)$ is defined to be the set of all (complex or real-valued) measurable functions $f$ such that  
$$
\|f \mid L_{p(\cdot)}(E)\|:=\inf\bigg\{\lambda\in (0,\infty):\int_{E} \biggl(\frac{|f(x)|}{\lambda}\biggr)^{p(x)}\leq 1 \biggr\}<\infty.
$$
It is known that $L_{\p}(E)$ is a quasi-Banach space, a Banach space when $p^-\geq 1$. If $\p\equiv p$ is constant, then $L_{\p}(E)=L_p(E)$ is the classical Lebesgue space. 

For later use we recall that $L_{\p}(E)$ has the lattice property. Moreover, we have 
$$
\|f\mid L_{\p}(E)\|= \big\| |f|^r\big| L_{\frac{\p}{r}}(E)\big\|^{\frac 1r},  \qquad r\in (0,\infty),
$$
and 
$$
\|f+g\mid L_{\p}(E)\|^r\leq \|f\mid L_{\p}(E)\|^r+\|g\mid L_{\p}(E)\|^r, \qquad r\in \bigl(0,\min\{1,p^-\}\bigr).
$$

\medskip

In the setting of variable exponent function spaces it is needed to require some regularity conditions to the exponents. We recall now the standard conditions used.

\begin{definition}\label{log-holder}
	Let $g\in C(\rn)$. We say that $g$ is locally log-H\"older continuous, abbreviated $g\in C_{\rm{loc}}^{\log}(\rn)$, if there exists $c_{\log}(g)>0$ such that
	\begin{equation} \label{loc-log-holder}
	|g(x)-g(y)|\leq \frac{c_{\log}(g)}{\log(e+1/|x-y|)} \quad \text{for all}\;\; x,y\in\rn.
	\end{equation}
	We say that $g$ is globally log-H\"older continuous, abbreviated $g\in C^{\log}(\rn)$, if $g$ is locally log-H\"older continuous and there exists $g_{\infty}\in\real$ such that
	\begin{equation}\label{glob-log-holder}
	|g(x)-g_{\infty}|\leq \frac{c_{\log}}{\log(e+|x|)} \quad \text{for all}\;\; x\in\rn.
	\end{equation}
\end{definition}
\vspace{0.5cm}

Note that all functions in $C_{\rm{loc}}^{\log}(\rn)$ are bounded and if $g\in C^{\log}(\rn)$ then $g_{\infty}= \lim_{|x|\rightarrow \infty}g(x)$. Moreover, for $g \in \mathcal{P}(\rn)$ with $g_+<\infty$, we have that $g\in C^{\log}(\rn)$ if, and only if, $1/g \in C^{\log}(\rn)$. 
The notation $\mathcal{P}^{\log}(\rn)$ is used for those variable exponents $p\in \mathcal{P}(\rn)$ with $p\in C^{\log}(\rn)$.\\

\subsection{Mixed sequence-Lebesgue spaces}
We introduce now mixed sequence-Lebesgue spaces and, in the next subsection, we present some properties about them, which will be very useful throughout this work.
\begin{definition} \label{def:mixed} Let $p,q \in \P$ and $E$ be a measurable subset of $\rn$. 
	\begin{list}{}{\labelwidth1.3em\leftmargin2.3em}
		\item[{\upshape (i)\hfill}] Let $p^+, q^+<\infty$. The space $L_{\p}(\ell_{\q}(E))$ is defined to be the set of all sequences of measurable functions $(f_j)_{j \in \nat_0}$ on $E$ such that 
		\begin{equation}\label{eq:defLpq}
		\| (f_j)_{j \in \nat_0} \mid L_{\p}(\ell_{\q}(E)) \| := \left\| \left(\sum_{j=0}^{\infty} |f_{j}(x)|^{q(x)} \right)^{1/q(x)} \mid L_{\p}(E)\right\|<\infty. 
		\end{equation}
		\item[{\upshape (ii)\hfill}] The space $\ell_{\q}(L_{\p}(E))$ is defined to be the set of all sequences of measurable functions $(f_j)_{j \in \nat_0}$ on $E$ such that
		\begin{equation} \label{eq:norm-B}
		\|(f_j)_{j \in \nat_0} \mid \ell_{\q}(L_{\p}(E)) \| := \inf \left\{ \mu>0 : \varrho_{\ell_{\q}(L_{\p})}\left(\left(\frac{f_{j} \chi_E}{\mu}\right)_{j \in \nat_0}\right) \leq 1\right\} < \infty, 
		\end{equation}
		where, for all sequences $(g_j)_{j \in \nat_0}$ of measurable functions, 
		\begin{equation} \label{eq:modular-B-original}
		\varrho_{\ell_{\q}(L_{\p})}\big((g_j)_{j \in \nat_0}\big) := \sum_{j=0}^{\infty} \inf\left\{\lambda_{j}>0 : \varrho_{\p}\left(\frac{g_{j}}{\lambda_{j}^{1/\q}} \right)\leq 1 \right\},
		\end{equation}
		with the convention $\lambda^{1/\infty}=1$ for all $\lambda \in (0, \infty)$. 
	\end{list}
\end{definition}

\begin{remark}\label{rmk:mixed}
	\begin{list}{}{\labelwidth1.3em\leftmargin2.3em}
		\item[{\upshape (i)\hfill}]  If $E= \rn$, we simply write $ L_{\p}(\ell_{\q})$ or ${\ell_{\q}(L_{\p})}$.
		\item[{\upshape (ii)\hfill}] If $p,q \in \P$, then $\| \cdot \mid L_{\p}(\ell_{\q}) \|$ and $\| \cdot \mid {\ell_{\q}(L_{\p})}\|$  are quasi-norms in $L_{\p}(\ell_{\q})$ and ${\ell_{\q}(L_{\p})}$, respectively. Moreover, $\| \cdot \mid L_{\p}(\ell_{\q}) \|$ is a norm if $\min\{p^-, q^-\}\geq 1$. The same does not hold when it comes to the norm in \eqref{eq:norm-B}. It was shown in \cite{AH10} that $\| \cdot \mid \ell_{\q}(L_{\p})\|$ is a norm either when $q\geq 1$ is constant and $p^-\geq 1$, or when $\displaystyle \frac{1}{p(x)}+\frac{1}{q(x)}\leq 1$ almost everywhere. More recently, it was proved in \cite{KemVybnorm} that it also becomes a norm if $1\leq q(x)\leq p(x)\leq \infty$. 
		\item[{\hfill (iii)\hfill}] It was proved in \cite[Theorem~3.5]{AH10} that $\varrho_{\ell_{\q}(L_{\p})}$ is a semi-modular. The left-continuity property ensures that the \textit{unit ball property} holds, i.e., 
			\begin{equation}
				\| (f_j)_{j \in \nat_0} \mid \ell_{\q}(L_{\p}) \| \leq 1\quad \mbox{if, and only if,}\quad \varrho_{\ell_{\q}(L_{\p})}\big((f_j)_{j \in \nat_0}\big) \leq 1.\nonumber
			\end{equation}
		{Moreover, for $r\in(0,\infty)$,
		$$	\| (f_j)_{j \in \nat_0} \mid \ell_{\q}(L_{\p}) \|=
			\| (|f_j|^r)_{j \in \nat_0} \mid \ell_{\frac{\q}{r}}(L_{\frac{\p}{r}}) \|^{1/r}.$$}	
		\item[{\upshape (iv)\hfill}] If $q^+ <\infty$, then we can replace \eqref{eq:modular-B-original} by the simpler expression
		\begin{equation} \label{eq:modular-B}
		\varrho_{\ell_{\q}(L_{\p})}\big((g_j)_{j \in \nat_0}\big) = \sum_{j=0}^{\infty} \Big\| |g_{j}|^{\q} \mid L_{\frac{\p}{\q}} (\rn)\Big\|. 
		\end{equation}
	\end{list}
\end{remark}

\medskip

Let $\mathcal{G}(\real^{n+1}_+)$ be the set of all measurable functions $\phi: \rn \times [0, \infty) \rightarrow (0, \infty)$ having the following properties:
there exist positive constants $c_1(\phi)$ and $\tilde{c_1}(\phi)$ such that
\begin{equation} \label{phi-cond1}
\frac{1}{\tilde{c_1}(\phi)} \leq \frac{\phi(x,r)}{\phi(x, 2r)} \leq c_1(\phi) \quad \text{ for all } x\in\rn \;\, \text{and} \;\,   r\in(0,\infty),
\end{equation}
and there exists a positive constant $c_2(\phi)$ such that, for all $x, y \in \rn$ and $r \in (0, \infty)$ with $|x-y|\leq r$,
\begin{equation} \label{phi-cond2}
\frac{1}{c_2(\phi)} \leq \frac{\phi(x,r)}{\phi(y, r)} \leq c_2(\phi).
\end{equation}
The conditions \eqref{phi-cond1} and  \eqref{phi-cond2} are called doubling condition and compatibility condition, respectively, and have been used by Nakai \cite{Nak93, Nak06} and Nakai and Sawano \cite{NS12}. 
Examples of functions in $\mathcal{G}(\real^{n+1}_+)$ are provided in \cite[Remark~1.3]{YZY15-F}.

In what follows, for $\phi \in \mathcal{G}(\real^{n+1}_+)$ and {a cube} $Q:= Q(x, r)$ with center $x \in \rn$ and  {side length} $r \in (0, \infty)$, we define $\phi (Q) := \phi (Q(x,r)) := \phi(x, r)$.



The following is our convention for dyadic cubes:  For $j \in \mathbb{Z}$ and $k \in \zn$, denote by $Q_{jk}$ the dyadic cube $2^{-j}([0,1)^n + k)$ and $x_{Q_{jk}}$ its lower left corner. Let  $\mathcal{Q}:= \{ Q_{jk} : j \in \mathbb{Z}, \,k \in \zn\}$, $\mathcal{Q}^*:= \{ Q\in\mathcal{Q} : \ell(Q)\leq 1\}$ and  $j_Q:= - \log_2 \ell(Q)$  for all $Q\in  \mathcal{Q}$. 
When the dyadic cube $Q$ appears as an index, such as $\sum_{Q\in \mathcal{Q}}$ and $(\cdots)_{Q\in \mathcal{Q}}$, it is understood that $Q$ runs over all dyadic cubes in $\rn$. 

\medskip

For the function spaces under consideration in this paper, the following modified mixed-Lebesgue sequence spaces are of special importance. 
\begin{definition}\label{mixed-phi}
	Let $p,q\in \mathcal{P}(\rn)$ and $\phi\in \mathcal{G}(\real^{n+1}_+)$.
	\begin{list}{}{\labelwidth1.3em\leftmargin2.3em}
		\item[{\upshape (i)\hfill}] We denote by $\SeqB$ the set of all sequences $(g_j)_{j\in\nat_0}$ of measurable functions on $\rn$ such that 
		$$
			\| (g_j)_{j\in\nat_0} \mid \SeqB \|:= \sup_{P\in \mathcal{Q}} \frac{1}{\phi(P)} \| (g_j)_{j\geq(j_p\vee 0)} \mid \ell_{\q}(L_{\p}(P))\|<\infty,
		$$
		where  the supremum is taken over all dyadic cubes $P$ in $\rn$.  
		\item[{\upshape (ii)\hfill}] We denote by $\SeqF$ the set of all sequences $(g_j)_{j\in\no}$ of measurable functions on $\rn$ such that 
		$$
		\| (g_j)_{j\in\nat_0} \mid \SeqF \|:= \sup_{P\in \mathcal{Q}} \frac{1}{\phi(P)} \| (g_j)_{j\geq(j_p\vee 0)} \mid L_{\p}(\ell_{\q}(P))\|<\infty,
		$$
		where  the supremum is taken over all dyadic cubes $P$ in $\rn$. 
		\end{list}
\end{definition}
 
\begin{remark} We remark that $\SeqB$ and $\SeqF$ are quasi-normed spaces that coincide with the mixed Lebesgue-sequence spaces $\ell_{\q}(L_{\p})$ and $L_{\p}(\ell_{\q})$ from Definition \ref{def:mixed}, respectively, when $\phi\equiv 1$. The case of $\q=q$ constant and $\phi(Q):=|Q|^{\tau}$ for all cubes $Q$ and $\tau\in[0,\infty)$, has also been considered in \cite{LYYSU13}.\\
\end{remark}

\subsection{Auxiliary results}

Although the Hardy-Littlewood maximal operator $\mathcal{M}_t$ constitutes a great tool in the theory of classical function spaces and also in the scale of variable Lebesgue spaces, it is not, in general, a good instrument in the mixed spaces $\ellpq$ and $\ellqp$. It was actually proved in \cite{AH10} and in \cite{DHR09} that this operator is not bounded in these spaces if one considers $q$ non-constant. However, this adversity can be overcome by the use of convolution inequalities involving radially decreasing kernels, namely the so-called \textit{$\eta$-functions}, defined by
$$ \eta_{\nu, R}(x) := \frac{2^{n \nu }}{(1+2^{\nu }|x|)^R}, \quad x\in \rn,$$
for $\nu \in \nat_0$ and $R>0$. \\

The next result was proved in \cite[Lemma~4.6.3]{DHHR} and shows that the convolution operator is well-behaved in $\Lp$ for $p \in \Plog$, when considering radially decreasing integrable functions. 
\begin{lemma}
	Let $p \in \Plog$ with $p(x)\geq 1$. Let $\psi \in L_1(\rn)$ and $\psi_{\varepsilon}(x):= \varepsilon^{-n}\psi(x/\varepsilon)$, for $\varepsilon>0$. Suppose that $\Psi(x):=\sup_{|y|\geq |x|}|\psi(y)|$ is integrable and $f \in \Lp$. Then
	\begin{equation*}
	\|\psi_\varepsilon \ast f \mid \Lp\| \lesssim \| \Psi \mid L_1(\rn)\| \|f \mid \Lp\|, 
	\end{equation*}
	where the implicit constant depends only on $n$ and $p$. 
\end{lemma}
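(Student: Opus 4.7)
The plan is, first, to reduce the convolution to one with a nonnegative, radial, radially non-increasing kernel; second, to dominate such a convolution pointwise by the Hardy--Littlewood maximal function; and third, to invoke Diening's theorem on the boundedness of the maximal operator on $\Lp$.

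First I would note that, by construction, $\Psi$ is a nonnegative radial radially non-increasing majorant of $|\psi|$, i.e.\ $|\psi(x)|\leq\Psi(x)$ for every $x\in\rn$, which persists under dilation: $|\psi_{\varepsilon}(x)|\leq\Psi_{\varepsilon}(x)$. The triangle inequality for integrals then gives the pointwise bound
\begin{equation*}
|(\psi_{\varepsilon}\ast f)(x)|\leq(\Psi_{\varepsilon}\ast|f|)(x)\qquad\text{for every }x\in\rn,
\end{equation*}
so it is enough to estimate $\|\Psi_{\varepsilon}\ast|f|\mid\Lp\|$, and the kernel may be assumed nonnegative and radially non-increasing for the remainder of the argument.

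Next I would establish the classical pointwise domination
\begin{equation*}
(\Psi_{\varepsilon}\ast g)(x)\lesssim\|\Psi\mid L_{1}(\rn)\|\cdot\mathcal{M}g(x),\qquad g\geq 0,
\end{equation*}
where $\mathcal{M}$ is the (uncentered) Hardy--Littlewood maximal operator. The standard route is via the layer-cake formula $\Psi=\int_{0}^{\infty}\chi_{\{\Psi>t\}}\,dt$: since $\Psi$ is radial and non-increasing, each super-level set $\{\Psi>t\}$ is a centered ball $B(0,r(t))$, and after scaling each contribution satisfies $(\chi_{B(0,\varepsilon r(t))})\ast g(x)\leq|B(0,\varepsilon r(t))|\cdot\mathcal{M}g(x)$; integrating in $t$ recovers exactly the factor $\|\Psi\mid L_{1}(\rn)\|$. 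Equivalently, a dyadic decomposition of $\Psi$ into annuli $\{2^{k}\leq|x|<2^{k+1}\}$ yields the same bound.

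With both pointwise estimates in hand, the proof would conclude by applying $\|\cdot\mid\Lp\|$ and invoking Diening's theorem: for $p\in\Plog$ with $p^{-}>1$, $\mathcal{M}$ is bounded on $\Lp$ with constant depending only on $n$ and $p$, which combined with the chain above produces the claimed inequality. I expect the main obstacle to be the borderline regime where $p(x)=1$ is attained on a set of positive measure: there $\mathcal{M}$ fails to be bounded on $\Lp$ (as already on $L_{1}(\rn)$), and the argument above breaks down. To cover this case one would need to refine the last step by exploiting the log-Hölder regularity of $p$ to derive a pointwise estimate of the form $(\Psi_{\varepsilon}\ast g)(x)^{p(x)}\lesssim(\Psi_{\varepsilon}\ast g^{p(\cdot)})(x)+h(x)$ with $h$ having a controlled modular, and then close the estimate via Young's inequality applied to the scalar $L_{1}$-convolution $\Psi_{\varepsilon}\ast g^{p(\cdot)}$; this is the technical heart of the argument as carried out in \cite{DHHR}.
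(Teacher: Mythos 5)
The paper does not actually prove this lemma; it is taken verbatim from \cite[Lemma~4.6.3]{DHHR} and stated with a citation. Your proposal is therefore really a reconstruction of the argument in that reference, and it is essentially sound. Steps~1 and~2 (reduction to the radial non-increasing majorant $\Psi$, then the classical layer-cake/dyadic-annulus domination $\Psi_\varepsilon \ast g \leq \|\Psi\mid L_1(\rn)\|\,\mathcal{M}g$) are correct and standard. The weak point, which you correctly flag yourself, is that the hypothesis is $p(x)\geq 1$ rather than $p^->1$, so the clean ``maximal function $+$ Diening's theorem'' route cannot be the whole proof: boundedness of $\mathcal{M}$ on $\Lp$ genuinely fails when $p^-=1$. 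The proof in \cite{DHHR} in fact bypasses $\mathcal{M}$ entirely and proceeds directly via the modular estimate you sketch at the end, namely the pointwise ``key estimate'' of the form
\begin{equation*}
(\Psi_\varepsilon \ast |f|)(x)^{p(x)} \lesssim \big(\Psi_\varepsilon \ast |f|^{p(\cdot)}\big)(x) + h(x),\qquad \|f\mid\Lp\|\leq 1,
\end{equation*}
with $h$ an integrable error controlled by log-H\"older continuity; one then integrates, uses Fubini/Young to produce the $\|\Psi\mid L_1\|$ factor, and unnormalizes. So your Steps~2--3 give a valid and shorter argument for $p^->1$, but the endpoint case you only sketch is not a corner case here: it is the technical heart, and in the reference's proof it is the \emph{only} argument, applied uniformly. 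To call the proposal complete you would need to actually carry out that modular estimate (or restrict the statement to $p^->1$), rather than delegate it to a one-sentence indication. As a review of approach, though, you have identified both the easy route and exactly where it breaks and why, which is the right diagnosis.
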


\begin{remark}\label{rmk:conv-Lp} Note that we can use the previous lemma for the $\eta$-functions defined above. Namely, taking $\psi=\eta_{0, m}$ with $m>n$, then we have $\Psi= \eta_{0,m} \in L_1(\rn)$. Thus, setting $\psi_\varepsilon=\eta_{\nu,m}$ with $\varepsilon = 2^{-\nu}$, we get 
	$$
	\| \eta_{\nu,m} \ast f \mid \Lp\| \lesssim  \| f \mid \Lp\|
	$$
	if $m>n$, for $f \in \Lp$ and $p \in \Plog$ with $p(x)\geq 1$, for $x \in \rn$.
\end{remark}

The following two results show that the $\eta$-functions are well suited for the mixed Lebesgue-sequence spaces. The first one was proved in \cite[Theorem~3.2]{DHR09} and the second goes back to \cite[Lemma~10]{KV12}.

\begin{lemma}\label{conv-F} Let $p, q \in \Plog$ with $1<p^-\leq p^+ <\infty$ and $1<q^-\leq q^+ <\infty$. Then the inequality
	\begin{equation}
	\| \left(\eta_{\nu, R} \ast f_{\nu} \right)_{\nu \in \nat_0}\mid \ellpq \| \leq c\, \|(f_{\nu})_{\nu \in \nat_0} \mid \ellpq \| \nonumber
	\end{equation}
	holds for every sequence $(f_{\nu})_{\nu \in \nat_0}$ of $L_1^{\rm{loc}}(\rn)$ functions and constant $R>n$.
\end{lemma}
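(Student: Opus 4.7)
The plan is to reduce the convolution estimate to the vector-valued Fefferman--Stein maximal inequality on $\ellpq$. The two steps are completely separate: a pointwise step that is purely real-analytic, and a functional step that carries the whole burden of the variable exponents.

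First I would establish the pointwise bound
\[
\bigl|(\eta_{\nu,R}\ast f_\nu)(x)\bigr|\;\lesssim\;\mathcal{M}f_\nu(x),\qquad x\in\rn,\ \nu\in\no,
\]
valid as soon as $R>n$, with an implicit constant depending only on $n$ and $R$. The argument is the standard decomposition of the radially decreasing kernel $\eta_{\nu,R}$ into level sets on dyadic annuli $\{2^k<1+2^\nu|y|\le 2^{k+1}\}$ around the origin; since $\eta_{\nu,R}(y)\le 2^{n\nu}2^{-kR}$ on the $k$-th annulus and the annulus has measure $\sim 2^{-n\nu}2^{kn}$, the contribution of the $k$-th annulus to $\eta_{\nu,R}\ast f_\nu(x)$ is bounded by $2^{k(n-R)}\mathcal{M}f_\nu(x)$, and the geometric series converges precisely when $R>n$. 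This bound is uniform in $\nu$, which is exactly what we need before summing in the $\ell_{\q}$-direction.

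Second, I would invoke the vector-valued maximal inequality of Diening--H\"ast\"o--Roudenko on variable mixed spaces: under the assumptions $p,q\in\Plog$ with $1<p^-\le p^+<\infty$ and $1<q^-\le q^+<\infty$,
\[
\bigl\|(\mathcal{M}f_\nu)_{\nu\in\no}\mid \ellpq\bigr\|\;\lesssim\;\bigl\|(f_\nu)_{\nu\in\no}\mid \ellpq\bigr\|.
\]
Combining this with the pointwise estimate and the lattice property of $\ellpq$ (which allows passing from $|\eta_{\nu,R}\ast f_\nu|\lesssim \mathcal{M}f_\nu|$ to the quasi-norm inequality termwise in the $\ell_{\q}(x)$-direction and then through $\Lp$) yields the claim directly.

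The hard part is of course the vector-valued maximal inequality itself, and it is the only place where the log-H\"older conditions on $\p$ and $\q$ enter. I would either quote it from \cite{DHR09} or, if a self-contained proof were wanted, follow the DHR strategy: reduce by scaling to the unit-norm level set of the modular, use the pointwise log-H\"older control to replace $q(x)$ and $p(x)$ by constant exponents on each dyadic cube modulo a harmless Muckenhoupt-type error (the quantity $\eta_{\nu,R}$ itself conveniently absorbs this error on scale $2^{-\nu}$), and then invoke the classical scalar maximal theorem together with the mixed-norm duality for constant exponents. The pointwise reduction to $\mathcal{M}f_\nu$ is routine; everything non-trivial in the statement is encoded in the vector-valued maximal bound.
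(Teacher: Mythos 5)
Your reduction fails at the second step. You invoke a vector-valued Fefferman--Stein maximal inequality
\[
\bigl\|(\mathcal{M}f_\nu)_{\nu}\mid L_{p(\cdot)}(\ell_{q(\cdot)})\bigr\|\lesssim\bigl\|(f_\nu)_{\nu}\mid L_{p(\cdot)}(\ell_{q(\cdot)})\bigr\|
\]
as though it were available under log-H\"older conditions with variable $q$, but this inequality is \emph{false} when $q$ is non-constant --- and this is precisely the point that the paper emphasizes in the paragraph immediately preceding the lemma, citing \cite{AH10} and \cite{DHR09} for the failure. The known variable-exponent Fefferman--Stein theorem (Cruz-Uribe--Fiorenza--Martell--P\'erez) requires $q$ to be a constant; when $q(\cdot)$ is allowed to vary, one can construct sequences $(f_\nu)$ for which the maximal function bound blows up, essentially because $\mathcal{M}f_\nu$ at a point $x$ pulls in information from arbitrarily far away, where $q$ takes different values. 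Your pointwise estimate $|\eta_{\nu,R}\ast f_\nu|\lesssim\mathcal{M}f_\nu$ is of course correct for $R>n$, but it throws away exactly the feature that makes the lemma true.

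The real content of the lemma is that the kernel $\eta_{\nu,R}$ at level $\nu$ is concentrated on scale $2^{-\nu}$, which matches the scale on which the log-H\"older condition makes $q(\cdot)$ effectively constant up to a controlled multiplicative error $(1+|x-y|)^{c_{\log}(q)}$-type factor. Because of this matching, the proof in \cite[Theorem~3.2]{DHR09} works directly with the modular, replacing $q(y)$ by $q(x)$ on the ball of radius $\sim 2^{-\nu}$ and absorbing the error into the decay of $\eta_{\nu,R}$; the operator $\mathcal{M}$ has no such built-in scale and cannot participate in this trade. A correct proof must therefore keep the $\nu$-dependence of the kernel intact rather than majorizing by $\mathcal{M}f_\nu$, which is scale-free. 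As a side note, the paper does not reprove this lemma; it is quoted from \cite{DHR09}, so the comparison is against that argument.
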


\begin{lemma}\label{conv-B} Let $p, q \in \Plog$ with $p^- \geq 1$. For all $R>n +c_{\log}(1/q)$, there exists a constant $c>0$ such that for all sequences $(f_{\nu})_{\nu \in \nat_0} \in \ell_{\q}(L_{\p})$ it holds
	\begin{equation}
	\| \left(\eta_{\nu, R} \ast f_{\nu} \right)_{\nu \in \nat_0} \mid \ellqp \| \leq c \, \|(f_{\nu})_{\nu \in \nat_0} \mid \ellqp \|. \nonumber
	\end{equation}
\end{lemma}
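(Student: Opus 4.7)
The plan is to follow the strategy for the analogous convolution inequality in variable-exponent Besov spaces developed in \cite{AH10, KV12}; this is technically more delicate than its Triebel-Lizorkin counterpart (Lemma \ref{conv-F}) because the modular in \eqref{eq:modular-B-original} hides an infimum rather than a direct $q(\cdot)$-th power integration, so the variable exponent $q$ cannot simply be brought inside.

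First, by the unit ball property (Remark \ref{rmk:mixed}(iii)) and homogeneity, it suffices to pass to a modular statement: if $\varrho_{\ell_{q(\cdot)}(L_{p(\cdot)})}\bigl((f_\nu)_\nu\bigr)\leq 1$, then $\varrho_{\ell_{q(\cdot)}(L_{p(\cdot)})}\bigl((c^{-1}\eta_{\nu,R}\ast f_\nu)_\nu\bigr)\leq 1$ for some constant $c=c(n,R,p,q)$. Unwinding \eqref{eq:modular-B-original} I would then fix, for each $\nu$, a near-optimal $\lambda_\nu>0$ with $\varrho_{p(\cdot)}\bigl(f_\nu/\lambda_\nu^{1/q(\cdot)}\bigr)\leq 1$ and $\sum_\nu\lambda_\nu$ close to the modular of $(f_\nu)_\nu$, and aim to show that the analogous infima for the convolved sequence are bounded by $c\,\lambda_\nu$.

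The core step is a pointwise estimate of the form
$$
\left|\frac{\eta_{\nu,R}\ast f_\nu(x)}{\lambda_\nu^{1/q(x)}}\right|
\;\lesssim\;
\eta_{\nu,\,R-c_{\log}(1/q)}\ast\!\left(\frac{|f_\nu|}{\lambda_\nu^{1/q(\cdot)}}\right)\!(x),
$$
obtained by writing $\lambda_\nu^{-1/q(x)}=\lambda_\nu^{-1/q(y)}\cdot\lambda_\nu^{1/q(y)-1/q(x)}$ inside the convolution integral and absorbing the factor $\lambda_\nu^{1/q(y)-1/q(x)}$ into a polynomial loss in $1+2^\nu|x-y|$ via the log-Hölder continuity of $1/q$. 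The margin $R-c_{\log}(1/q)>n$ is exactly what is needed so that the remaining $\eta$-kernel is still of the admissible type. Having this pointwise bound, the scalar convolution inequality from Remark \ref{rmk:conv-Lp}, applied in $L_{p(\cdot)}$ (admissible since $p\in\Plog$ and $p^-\geq 1$), yields
$$
\varrho_{p(\cdot)}\!\left(\frac{c^{-1}\eta_{\nu,R}\ast f_\nu}{\lambda_\nu^{1/q(\cdot)}}\right)\leq \varrho_{p(\cdot)}\!\left(\frac{f_\nu}{\lambda_\nu^{1/q(\cdot)}}\right)\leq 1,
$$
so the infimum in the definition of the modular for $(\eta_{\nu,R}\ast f_\nu)_\nu$ is at most $\lambda_\nu$. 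Summing in $\nu$ and invoking the unit ball property once more closes the argument.

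The principal obstacle is the pointwise estimate above: the factor $\lambda_\nu^{1/q(y)-1/q(x)}$ is not uniformly bounded in $y$ when $\lambda_\nu$ is far from $1$, and its control requires a case analysis on whether $\lambda_\nu\gtrless 1$ and whether $|x-y|\gtrless 2^{-\nu}$, together with \emph{both} the local and the at-infinity parts of the log-Hölder condition on $1/q$ (the latter via $q_\infty$, cf.\ \eqref{glob-log-holder}). This is the unique point where the sharp threshold $R>n+c_{\log}(1/q)$, rather than merely $R>n$, is used; everything else is bookkeeping with the modular and the already-available scalar bound from Remark \ref{rmk:conv-Lp}.
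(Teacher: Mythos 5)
You should first be aware that the paper gives no proof of Lemma \ref{conv-B} at all: it is quoted from \cite[Lemma~10]{KV12} (which in turn builds on \cite{AH10}), so your proposal has to be measured against that argument. Your reduction to a modular statement via the unit ball property and the choice of near-optimal $\lambda_\nu$ is fine, but the core pointwise estimate is a genuine gap, not a technicality to be settled by case analysis. What you need is $\lambda_\nu^{1/q(y)-1/q(x)}\lesssim(1+2^{\nu}|x-y|)^{c_{\log}(1/q)}$ uniformly in $\lambda_\nu\in(0,1]$, but log-H\"older continuity controls $|1/q(x)-1/q(y)|$ only in terms of $|x-y|$ and gives no control in terms of $\log(1/\lambda_\nu)$: for fixed $\nu$ and fixed points with $1/q(y)<1/q(x)$ and $|x-y|\sim 1$, the left-hand side blows up as $\lambda_\nu\to 0$ while the right-hand side is a constant. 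Neither the distinction $\lambda_\nu\gtrless 1$ or $|x-y|\gtrless 2^{-\nu}$ nor the at-infinity condition (vacuous for $x,y$ in a fixed compact set) can repair this; multiplicative absorption into the kernel works for factors like $2^{\nu s(x)}$, whose size is coupled to the scale $2^{\nu}$ of $\eta_{\nu,R}$, but $\log(1/\lambda_\nu)$ has no such coupling.

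Worse, not only the intermediate estimate but the conclusion you want from it is false: the near-optimal $\lambda_\nu$ for $f_\nu$ need not be admissible, even up to a constant, for $\eta_{\nu,R}\ast f_\nu$. Take $n=1$, $p\equiv 2$, $q\in\Plog$ smooth with $q\equiv 4$ on $[-1,1]$ and $q\equiv 2$ on $[9,11]$, and the one-term sequence $f_0=\varepsilon\chi_{[-1,1]}$, $f_\nu=0$ for $\nu\geq 1$. Then the optimal $\lambda_0$ equals $4\varepsilon^4$, whereas $(\eta_{0,R}\ast f_0)(x)\geq c_R\,\varepsilon$ on $[9,11]$, so that for every fixed $c>0$
$$
\varrho_{\p}\Big(\frac{c^{-1}\,\eta_{0,R}\ast f_0}{\lambda_0^{1/q(\cdot)}}\Big)\;\geq\;\int_{9}^{11}\Big(\frac{c_R\,\varepsilon}{2\,c\,\varepsilon^{2}}\Big)^{2}\,dx\;\longrightarrow\;\infty\qquad(\varepsilon\to 0),
$$
and any admissible constant for the convolved entry must be $\gtrsim\varepsilon^{2}\gg\lambda_0$; hence the claim that the infima for $(\eta_{\nu,R}\ast f_\nu)_\nu$ are bounded by $c\,\lambda_\nu$ fails. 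The lemma is nevertheless true, because the modular of the convolved sequence may be realized by a completely different distribution of the $\lambda_\nu$'s: convolution transports mass from regions where $1/q$ is small to regions where it is large, where division by $\lambda_\nu^{1/q(x)}$ amplifies far more strongly. Handling this redistribution is the substantive content of the proof in \cite{KV12}, which relies on a technical inequality from \cite{AH10} comparing $\lambda^{1/q(x)}$ with $\lambda^{1/q(y)}$ only up to additive remainder terms with polynomial spatial decay; this is precisely the part your sketch does not supply.
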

In the next result we state the corresponding counterparts for the modified mixed Lebesgue sequence spaces from Definition \ref{mixed-phi}. 


\begin{lemma} \label{eta-phi}
	Let $p, q \in \Plog$ and $\phi\in \mathcal{G}(\real^{n+1}_+)$. 
	\begin{list}{}{\labelwidth1.3em\leftmargin2.3em}
		\item[{\upshape (i)\hfill}] Let $p^- \geq 1$. If
		$$R > n + c_{\log}(1/q) + \max\left\{0, \log_2 \tilde{c}_1(\phi)\right\},$$
		then there exists $c>0$ such that {for all sequences $(f_{\nu})_{\nu \in \nat_0} \in \SeqB$ it holds}
		\begin{equation}
		\| \left(\eta_{\nu, R} \ast f_{\nu} \right)_{\nu \in \nat_0} \mid \SeqB \| \leq c \, \|(f_{\nu})_{\nu \in \nat_0} \mid \SeqB \|. \nonumber
		\end{equation}
		\item[{\upshape (ii)\hfill}] Let $1\leq p^-\leq p^+ <\infty$ and $1\leq q^-\leq q^+ <\infty$. If 
		$$R > n + \max\left\{0, \log_2 \tilde{c}_1(\phi)\right\},$$
		then there exists $c>0$ such that {for all sequences $(f_{\nu})_{\nu \in \nat_0} \in \SeqF$ it holds}
		\begin{equation}
		\| \left(\eta_{\nu, R} \ast f_{\nu} \right)_{\nu \in \nat_0} \mid \SeqF \| \leq c \, \|(f_{\nu})_{\nu \in \nat_0} \mid \SeqF \|. \nonumber
		\end{equation}
	\end{list}
\end{lemma}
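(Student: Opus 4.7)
The plan is to localize the convolution inequality to an arbitrary dyadic cube $P\in\mathcal{Q}$ via an annular decomposition around $P$, trading the off-diagonal decay of $\eta_{\nu,R}$ against the doubling blow-up of $\phi$ on the far annuli and reducing on each piece to the unweighted inequality from Lemma \ref{conv-B} (respectively Lemma \ref{conv-F}).

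Fix $P\in\mathcal{Q}$, write $j_P^+:=j_P\vee 0$, set $\alpha:=\max\{0,\log_2\tilde{c}_1(\phi)\}$, and pick $\varepsilon>0$ small enough that $R':=R-\alpha-\varepsilon$ still satisfies $R'>n+c_{\log}(1/q)$ in case (i), or $R'>n$ in case (ii). Split each $f_\nu$ as
$$f_\nu=f_\nu\,\chi_{2P}+\sum_{i=1}^{\infty}f_\nu\,\chi_{2^{i+1}P\setminus 2^iP}\;=:\;\sum_{i=0}^{\infty}f_\nu^{(i)}.$$
For $x\in P$ and $y\in\supp f_\nu^{(i)}$ with $i\ge 1$ one has $|x-y|\gtrsim 2^i\ell(P)$; since $\nu\ge j_P^+$ forces $\ell(P)\ge 2^{-\nu}$, it follows that $1+2^\nu|x-y|\gtrsim 2^i$, and hence
$$\eta_{\nu,R}(x-y)\;\lesssim\;2^{-i(\alpha+\varepsilon)}\,\eta_{\nu,R'}(x-y),$$
(with the trivial estimate $\eta_{\nu,R}\le\eta_{\nu,R'}$ for $i=0$). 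This converts the far-field contributions into $\eta_{\nu,R'}$-convolutions that fall into the scope of the unweighted lemmas.

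Set $\tilde{f}_\nu^{(i)}:=f_\nu^{(i)}$ for $\nu\ge j_P^+$ and $\tilde{f}_\nu^{(i)}:=0$ otherwise. Combining the pointwise bound with the lattice property of $L_\p$ and with Lemma \ref{conv-B} (respectively Lemma \ref{conv-F}) yields, for every $i\ge 0$,
$$\bigl\|(\eta_{\nu,R}\ast f_\nu^{(i)})_{\nu\ge j_P^+}\,\bigm|\,\ell_\q(L_\p(P))\bigr\|\;\lesssim\;2^{-i(\alpha+\varepsilon)}\bigl\|(\tilde{f}_\nu^{(i)})_\nu\,\bigm|\,\ell_\q(L_\p)\bigr\|,$$
with the analogous inequality for $L_\p(\ell_\q)$ in case (ii). Now cover $2^{i+1}P$ by a dimensional number of dyadic cubes $\{Q_i^{(k)}\}_k$ of side length $2^{i+1}\ell(P)$; applying a quasi-triangle inequality in $k$, using $j_{Q_i^{(k)}}\le j_P$ so that the sum over $\nu\ge j_P^+$ is dominated by the one over $\nu\ge(j_{Q_i^{(k)}}\vee 0)$, and iterating \eqref{phi-cond1}--\eqref{phi-cond2} to obtain $\phi(Q_i^{(k)})\lesssim 2^{i\alpha}\phi(P)$, one arrives at
$$\bigl\|(\tilde{f}_\nu^{(i)})_\nu\,\bigm|\,\ell_\q(L_\p)\bigr\|\;\lesssim\;2^{i\alpha}\phi(P)\,\|(f_\nu)_\nu\mid\SeqB\|,$$
so that the $i$-th piece is controlled by $c\,2^{-i\varepsilon}\phi(P)\,\|(f_\nu)_\nu\mid\SeqB\|$.

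Finally, choose $r\in(0,\min\{1,p^-,q^-\})$ so that Remark \ref{rmk:mixed}(iii) provides an $r$-triangle inequality; summing the geometric series $\sum_{i\ge 0}2^{-i\varepsilon r}<\infty$, dividing by $\phi(P)$, and taking the supremum over $P\in\mathcal{Q}$ yields (i). Part (ii) follows verbatim with $L_\p(\ell_\q)$ in place of $\ell_\q(L_\p)$ and Lemma \ref{conv-F} in place of Lemma \ref{conv-B}. The main technical obstacle is the calibration $R=R'+(\alpha+\varepsilon)$: the decay factor $2^{-i(\alpha+\varepsilon)}$ must strictly beat the doubling factor $2^{i\alpha}$ of $\phi$, while $R'$ must simultaneously lie in the admissible range of the unweighted convolution lemma; the hypothesis on $R$ is tailored exactly so that both demands can be met by a sufficiently small $\varepsilon>0$.
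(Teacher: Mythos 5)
Your proof is correct and follows essentially the same route as the paper's: fix a dyadic cube $P$, split $f_\nu$ into a near piece supported on $2P$ and annular far pieces supported on $2^{i+1}P\setminus 2^iP$, use $\ell(P)\ge 2^{-\nu}$ (for $\nu\ge j_P\vee 0$) to trade the excess decay $R-R'$ of $\eta_{\nu,R}$ for a geometric factor $2^{-i(\alpha+\varepsilon)}$, apply the unweighted Lemma~\ref{conv-B}/\ref{conv-F} on each piece, and absorb the doubling growth $\phi(2^{i+1}P)\lesssim 2^{i\alpha}\phi(P)$ into the surplus $2^{-i\varepsilon}$. The only cosmetic differences are the parametrization (you set $R'=R-\alpha-\varepsilon$, the paper chooses $\varepsilon'>\alpha$ with $R-\varepsilon'>n+c_{\log}(1/q)$, which is the same calibration) and that you make explicit both the $r$-triangle inequality needed to sum the annular pieces in a quasi-normed $\ell_\q(L_\p)$ and the covering of $2^{i+1}P$ by finitely many dyadic cubes to return to the $\SeqB$-norm; the paper passes over both of these points silently.
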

\begin{proof}
	We will prove part (i), as the second follows similarly.
	
For any given dyadic cube $P \in \mathcal{Q}$ and any $\nu \in \nat_0$, we decompose each $f_\nu$ into the sum
$$
f_\nu = f_\nu^0 + \sum_{i=1}^\infty f_\nu^i,
$$
where
$$
f_\nu^0:= f_\nu \, \chi_{Q(c_P, 2^{-j_P+1})} \quad \mbox{and} \quad f_\nu^i:= f_\nu \, \chi_{R_i}, \quad \mbox{with } R_i:= Q(c_P, 2^{-j_P+i+1})\backslash Q(c_P, 2^{-j_P+i}),
$$
for $i \in \nat$ and $c_P$ being the center of the cube $P$. Then we have
\begin{align*}
&\frac{1}{\phi(P)}\| \left(\eta_{\nu, R} \ast f_{\nu} \right)_{\nu \geq (j_P \vee 0)} \mid  \ell_{\q}(L_{\p}(P)) \| \\
&\leq \frac{1}{\phi(P)} \| \left(\eta_{\nu, R} \ast f_{\nu}^0 \right)_{\nu \geq (j_P \vee 0)} \mid \ell_{\q}(L_{\p}(P)) \| + \frac{1}{\phi(P)} \Big\| \Big(\sum_{i=1}^{\infty }\eta_{\nu, R} \ast f_{\nu}^i \Big)_{\nu \geq (j_P \vee 0)} \mid  \ell_{\q}(L_{\p}(P)) \Big\|\\
&=: I_1 + I_2. 
\end{align*}
	 
	 We claim that, for $j=1,2$,
	\begin{equation}\label{claim}
	 I_j \, \lesssim  \|(f_{\nu})_{\nu \in \nat_0} \mid \SeqB \|, 
	\end{equation}
	 which, together with the arbitrariness of $P \in \mathcal{Q}$, allows us to conclude the proof. \\
	 
	 {\it Step 1.} Let us prove \eqref{claim} for $j=1$. Here we apply Lemma \ref{conv-B} with $R>n + c_{\log}(1/q)$ and use \eqref{phi-cond1} to obtain
	 \begin{align*}
	 I_1 &= \frac{1}{\phi(P)} \| \left(\eta_{\nu, R} \ast f_{\nu}^0 \right)_{\nu \geq (j_P \vee 0)} \mid \ell_{\q}(L_{\p}(P)) \| \\
	 &\leq \frac{1}{\phi(P)} \| \left( f_{\nu}^0 \right)_{\nu \geq (j_P \vee 0)} \mid \ell_{\q}(L_{\p}(\rn)) \| \\
	 & \leq \tilde{c}_1(\phi) \, \frac{1}{\phi(Q(c_P, 2^{-j_p+1}))} \big\| \left( f_\nu \, \chi_{Q(c_P, 2^{-j_P+1})} \right)_{\nu \geq (j_P \vee 0)} \mid \ell_{\q}(L_{\p}(\rn)) \big\|\\
	 &\lesssim \|(f_{\nu})_{\nu \in \nat_0} \mid \SeqB \|,
	 \end{align*}
	 as we desired. \\
	 
	 {\it Step 2.} To estimate $I_2$, we start by estimating the convolution appearing here. Note that, for $x \in P$, $y \in R_i$, $i \in \nat$, and $\nu \geq j_P$, we have $|x-y|\geq 2^{i-1-j_P}\geq 2^{i-1-\nu}$ and hence
	 \begin{align*}
	  (\eta_{\nu,R} \ast f_\nu^i)(x) &= \int_{\rn} \frac{2^{\nu n}}{(1+2^\nu|x-y|)^R} \, f_\nu^i(y) \, dy \, \lesssim \, 2^{-i \varepsilon} \, (\eta_{\nu, R-\varepsilon} \ast f_\nu^i)(x),
	 \end{align*}
	 for some constant $\varepsilon$ satisfying $\varepsilon > \max\{0, \log_2 \tilde{c}_1(\phi)\}$ and $R-\varepsilon> n + c_{\log}(1/q)$. Therefore, using this, Lemma \ref{conv-B} and \eqref{phi-cond1}, we get
	 \begin{align*}
	 I_2 & \lesssim \frac{1}{\phi(P)} \sum_{i=1}^\infty 2^{-i \varepsilon} \, \big\| (\eta_{\nu, R- \varepsilon} \ast f_{\nu}^i)_{\nu \geq (j_P \vee 0)} \mid  \ell_{\q}(L_{\p}(P)) \big\|\\
	 &\leq  \frac{1}{\phi(P)} \sum_{i=1}^\infty 2^{-i \varepsilon} \, \big\| ( f_{\nu}^i)_{\nu \geq (j_P \vee 0)} \mid  \ell_{\q}(L_{\p}(\rn)) \big\|\\
	 &\leq \sum_{i=1}^\infty 2^{-i \varepsilon}\,  \frac{\phi(Q(c_P, 2^{-j_p+i+1}))}{\phi(P)}  \|(f_{\nu})_{\nu \in \nat_0} \mid \SeqB \|\\
	 &\leq \|(f_{\nu})_{\nu \in \nat_0} \mid \SeqB \|\, \sum_{i=1}^\infty 2^{-i (\varepsilon- \log_2\tilde{c}_1(\phi))}\\
	 &\lesssim \|(f_{\nu})_{\nu \in \nat_0} \mid \SeqB \|,
	 \end{align*}	
	 which allows us to conclude the proof of \eqref{claim}. Consequently the proof of Lemma \ref{eta-phi} is complete. 
\end{proof}

\begin{remark}
	In \cite[Lemma~2.2]{ZCY19} and \cite[Lemma~3.12]{WYY18} the authors stated a similar result, but with a stronger condition on the parameter $R$. Using a different decomposition of each function $f_\nu$, we obtained here an extended version of those results. \\
\end{remark}
Lastly, we present a discrete convolution inequality, which extends \cite[Lemma~5.6]{WYY18} slightly.  
\begin{lemma}\label{conv-ineq} 
		Let $p,q\in \mathcal{P}(\rn)$ and $\phi\in \mathcal{G}(\real^{n+1}_+)$. Let $D_1,D_2\in (0,\infty)$ with $D_2 > \max\left\{0, \log_2 \tilde{c}_1(\phi)\right\}$.  For any sequence $(g_\nu)_{\nu\in \nat_0}$ of  measurable functions on $\rn$, consider
		$$
		G_j(x):=\sum_{\nu=0}^j 2^{-(j-\nu)D_2}g_{\nu}(x)+\sum_{\nu=j+1}^{\infty} 2^{-(\nu-j)D_1}g_{\nu}(x), \quad x\in\rn, \quad j\in \no.
		$$
	Then there exist constants $C_1, C_2 >0$, depending on $\p, \q$ and $\delta$, such that
	\begin{equation} \label{disc-B}
	\|(G_{j})_{j \in \nat_0} \mid \SeqB \| \leq C_1 \, \|(g_\nu)_{\nu \in \nat_0} \mid \SeqB\| 
	\end{equation}
	and
	\begin{equation}\label{disc-F}
	\|(G_{j})_{j \in \nat_0} \mid \SeqF \| \leq C_2 \, \|(g_\nu)_{\nu \in \nat_0} \mid \SeqF\|. 
	\end{equation}
\end{lemma}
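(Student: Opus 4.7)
The plan is to follow closely the geometric decomposition strategy of Lemma~\ref{eta-phi}. I fix an arbitrary dyadic cube $P\in\mathcal{Q}$ and split
$$G_j = G_j^{(1)} + G_j^{(2)},\qquad G_j^{(1)}(x):=\sum_{\nu=0}^{(j_P\vee 0)-1} 2^{-(j-\nu)D_2}g_\nu(x),$$
so that $G_j^{(2)}$ only involves indices $\nu\geq j_P\vee 0$ (which are controlled directly by the $\SeqB$-norm on $P$), while $G_j^{(1)}$, present only when $j_P>0$, collects the low-frequency tail whose control has to be imported from the $\SeqB$- or $\SeqF$-norm on dyadic ancestors of $P$. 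For $G_j^{(2)}$, a termwise $r$-triangle inequality in $\ell_{\q}(L_{\p})$ (resp.\ $L_{\p}(\ell_{\q})$) combined with the geometric decay of the kernel gives the standard (non-$\phi$) discrete convolution bound on the unweighted mixed spaces and hence $\frac{1}{\phi(P)}\|(G_j^{(2)})_{j\geq (j_P\vee 0)}\mid\ell_{\q}(L_{\p}(P))\|\lesssim \|(g_\nu)\mid\SeqB\|$, and analogously for $\SeqF$; this reproduces \cite[Lemma~5.6]{WYY18} locally and uses only $D_1,D_2>0$.

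The substantial step is the estimate of $G_j^{(1)}$ when $j_P>0$, which rests on the factorisation
$$G_j^{(1)}(x) = 2^{-(j-j_P)D_2}\,H(x),\qquad H(x):=\sum_{\nu=0}^{j_P-1}2^{-(j_P-\nu)D_2}g_\nu(x),\quad j\geq j_P,$$
where $H$ is independent of $j$. Since $D_2>0$, the $j$-geometric factor can be absorbed at the level of the mixed quasi-norm -- pointwise inside $L_{\p}(\ell_{\q}(P))$ in the $F$-case via $\bigl(\sum_{k\geq 0}2^{-kD_2 q(x)}\bigr)^{1/q(x)}\lesssim 1$, and by a short modular argument using $\lambda^{1/q(\cdot)}\leq\max(\lambda^{1/q^-},\lambda^{1/q^+})$ in the $B$-case -- reducing matters to $\|H\chi_P\mid L_{\p}\|\lesssim \phi(P)\|(g_\nu)\mid\SeqB\|$ (resp.\ $\SeqF$). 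With $r:=\min\{1,p^-\}$, the $r$-triangle inequality in $L_{\p}$, together with the lattice property $\|g_\nu\chi_P\mid L_{\p}\|\leq \|g_\nu\chi_{P^{(\nu)}}\mid L_{\p}\|$ at the unique dyadic ancestor $P^{(\nu)}\supset P$ with $j_{P^{(\nu)}}=\nu$, single-term extraction from the $\SeqB$-norm evaluated on $P^{(\nu)}$, and the iterated doubling estimate $\phi(P^{(\nu)})\lesssim \tilde{c}_1(\phi)^{j_P-\nu}\phi(P)$ (following from \eqref{phi-cond1} and \eqref{phi-cond2}), yields
$$\|H\chi_P\mid L_{\p}\|^r \lesssim \phi(P)^r\,\|(g_\nu)\mid\SeqB\|^r\sum_{k=1}^{j_P}\bigl(2^{-D_2}\tilde{c}_1(\phi)\bigr)^{kr}.$$

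The main obstacle is ensuring the geometric series above is summable uniformly in $j_P$, which holds precisely under the sharp condition $D_2>\log_2\tilde{c}_1(\phi)$: the kernel decay $D_2$ must beat the worst-case doubling growth $\log_2\tilde{c}_1(\phi)$ of $\phi$ across dyadic scales, and any weaker assumption would allow the series to diverge for fine cubes. A secondary technical point is the extraction of the geometric factor $2^{-(j-j_P)D_2}$ from $\ell_{\q}(L_{\p})$ in the $B$-case, which has to be performed at the modular level because of the variable exponent $q(\cdot)$ appearing in $\lambda_j^{1/q(\cdot)}$ in \eqref{eq:modular-B-original}; the assumption $q\in\Plog$, which in particular ensures $q^->0$, is what makes this extraction bounded.
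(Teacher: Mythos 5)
Your proposal is correct and follows the same overall strategy as the paper: the part of the sum with $\nu\geq j_P\vee 0$ stays on $P$ and only needs $D_1,D_2>0$, while the low-frequency part $\nu<j_P$ (in the $D_2$-sum, and only when $j_P>0$) is controlled by passing to enlarged cubes containing $P$ and paying the iterated doubling constant of $\phi$, which is exactly where the sharp condition $D_2>\log_2\tilde{c}_1(\phi)$ is consumed. The execution differs in two respects. First, the paper works entirely at the modular level (normalizing $\|(g_\nu)\mid\SeqB\|=1$ and showing that a sum of infima controls the infimum of a sum under geometric decay in $j$), then splits the $D_2$-contribution into $H_1+H_2$ by the condition $k<(j_P\vee 0)$ versus $k\geq(j_P\vee 0)$, and for $H_1$ replaces $\chi_P$ by the characteristic function of a concentric cube $R=Q(c_P,2^{(j_P\vee 0)-k-j_P})$ before applying \eqref{phi-cond1}. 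Your factorization $G_j^{(1)}=2^{-(j-j_P)D_2}H$ with $H$ independent of $j$, followed by single-term extraction from $\SeqB$ evaluated on the dyadic ancestor $P^{(\nu)}$, reaches the same inequality by a slightly cleaner route; the ancestor $P^{(\nu)}$ plays the role of $R$ in the paper. Second, where you delegate $G_j^{(2)}$ to the ``standard non-$\phi$'' discrete convolution on $P$, the paper proves that restricted bound from scratch at the modular level.

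Two points to tighten. The lemma assumes only $p,q\in\mathcal{P}(\rn)$, not $\Plog$; your remark that ``$q\in\Plog$ ensures $q^->0$'' misattributes the hypothesis --- the bound $q^->0$ already comes from $q\in\mathcal{P}(\rn)$ --- so invoking log-H\"older continuity here would overclaim what is assumed. More substantively, the ``short modular argument'' you invoke to extract the factor $2^{-(j-j_P)D_2}$ from $\ell_{\q}(L_{\p})$ is exactly the delicate step, because the modular \eqref{eq:modular-B-original} has $\lambda_j^{1/q(\cdot)}$ with variable $q$ and the space is in general only a quasi-normed space. One workable route is the power trick $\|(f_j)\mid\ell_{\q}(L_{\p})\|=\|(|f_j|^r)\mid\ell_{\q/r}(L_{\p/r})\|^{1/r}$ with $r\leq\min\{1,p^-,q^-\}$, then a direct choice $\lambda_j:=\mu\,2^{-(j-j_P)\varepsilon}$ in the modular with $\varepsilon$ small relative to $D_2 r (q/r)^-$; if you fill this in, your argument is complete. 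The paper's inf-manipulation in Step~1 accomplishes the same thing, so it is worth comparing your variant against that step to make sure all the infimum/monotonicity inequalities really run in the direction you need.
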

\begin{proof}
	The inequality \eqref{disc-F} was stated and proved in \cite[Lemma~3.5]{GM18}. Therefore, we are left to prove \eqref{disc-B}.
	
	Let us assume that $p, q \geq 1$. The extension for all $p, q \in \mathcal{P}(\rn)$ can be done similarly as in Step 2 of the proof of \cite[Lemma~3.5]{GM18} and we omit it here. 
		
	Assume that $\|(g_\nu)_{\nu \in \nat_0} \mid \SeqB\|=1$, which, by Definition~\ref{mixed-phi} and Remark~\ref{rmk:mixed}(iii), implies that, for any cube $P \in \mathcal{Q}$,
	\begin{equation}\label{assump}
	\varrho_{\ell_{\q}(L_{\p})} \bigg( \frac{g_\nu \chi_P}{\phi(P)}\bigg) \leq 1.
	\end{equation}
	We then fix a cube $P \in \mathcal Q$ arbitrarily. Hence, 
	\begin{align*}
	I(P) &:= \frac{1}{\phi(P)} \|(G_j)_{j \geq (j_P \vee 0)} \mid \ellqp \|\\
	& \leq  \frac{1}{\phi(P)} \bigg\|\sum_{\nu=0}^j 2^{-(j-\nu)D_2}\ g_{\nu} \, \chi_P \mid \ellqp\bigg\|\\
	&\qquad + \frac{1}{\phi(P)} \bigg\|\sum_{\nu=j+1}^{\infty} 2^{-(\nu-j)D_1}\, g_{\nu} \, \chi_P \mid \ellqp\bigg\|\\
	&= I_2(P) + I_1(P).
	\end{align*}
	
	In what follows we will use the notation $\displaystyle c(\varepsilon):= \sum_{l=0}^\infty 2^{-l \varepsilon}$, for some $\varepsilon>0$. \\
	
	{\it Step 1.} Firstly we show that there exists some constant $c_1>0$ such that $I_1(P) \leq c_1$. Due to the unit ball property, we turn to the modular and note that
	\begin{align}\label{inf}
	&\varrho_{\ell_{\q}(L_{\p})}\bigg( \frac{c_1^{-1}}{\phi(P)} \sum_{\nu=j+1}^{\infty} 2^{-(\nu-j)D_1}\, g_{\nu} \, \chi_P  \bigg)\nonumber \\
	& = \sum_{j=(j_P \vee 0)} \inf\bigg\{ \lambda \in (0, \infty): \bigg\|\frac{c_1^{-1}}{\phi(P)} \sum_{\nu=j+1}^{\infty} 2^{-(\nu-j)D_1}\, \frac{g_{\nu} \, \chi_P}{\lambda^{1/\q}}  \mid \Lp \bigg\|\leq 1 \bigg\}\nonumber\\
	&\leq \sum_{j=(j_P \vee 0)} \inf\bigg\{ \lambda \in (0, \infty): c_1^{-1} \sum_{\nu=j+1}^{\infty} 2^{-(\nu-j)D_1} \Big\| \frac{g_{\nu} \, \chi_P}{\phi(P) \lambda^{1/\q}}  \mid \Lp \Big\|\leq 1 \bigg\}. 
	\end{align}	
	Let us define
	$$
	I_1^{j, \nu}(P):= \inf \bigg\{ \lambda \in (0, \infty): c_1^{-1}\, c(\varepsilon) \, 2^{-(\nu-j)(D_1-\varepsilon)} \Big\| \frac{g_{\nu} \, \chi_P}{\phi(P) \lambda^{1/\q}}  \mid \Lp \Big\|\leq 1 \bigg\},
	$$
	for $j \in \nat_0$, $\nu \geq j+1$ and some $0<\varepsilon<D_1$. We claim that, for each $\nu \geq j+1$, the sum $\displaystyle \sum_{k=j+1}^\infty I_1^{j,k}$ is not smaller than the infimum in \eqref{inf}. We may assume that this sum is finite. For any $\delta>0$ we have 
	$$
	c_1^{-1}\, c(\varepsilon) \, 2^{-(\nu-j)(D_1-\varepsilon)} \Big\| \frac{g_{\nu} \, \chi_P}{\phi(P)\,  [I_1^{j, \nu} + \delta 2^{-\nu}]^{1/\q}}  \mid \Lp \Big\| \leq 1,
	$$
	so that
	$$
	c_1^{-1}\, c(\varepsilon) \sum_{\nu=j+1}^\infty 2^{-(\nu-j)D_1} \Big\| \frac{g_{\nu} \, \chi_P}{\phi(P)\,  [I_1^{j, \nu} + \delta 2^{-\nu}]^{1/\q}}  \mid \Lp \Big\| \leq \sum_{\nu=j+1}^\infty 2^{-(\nu-j)\varepsilon} \leq \sum_{l=0}^\infty 2^{-l \varepsilon}. 
	$$
	Therefore
	$$
	c_1^{-1} \sum_{\nu=j+1}^\infty 2^{-(\nu-j)D_1} \Big\| \frac{g_{\nu} \, \chi_P}{\phi(P)\,  \sum_{k=j+1}^\infty[I_1^{j, k} + \delta 2^{-k}]^{1/\q}}  \mid \Lp \Big\| \leq 1
	$$
	and so
	$$
	\inf\bigg\{ \lambda \in (0, \infty): c_1^{-1} \sum_{\nu=j+1}^{\infty} 2^{-(\nu-j)D_1} \Big\| \frac{g_{\nu} \, \chi_P}{\phi(P) \lambda^{1/\q}}  \mid \Lp \Big\|\leq 1 \bigg\} \leq \sum_{k=j+1}^\infty \big(I_1^{j,k}(P) + \delta 2^{-k} \big).
	$$
	The claim follows then by the convergence of the second part of the series on the right-hand side and the arbitrariness of $\delta>0$. Now using this in \eqref{inf}, we have
	\begin{align*}
		&\varrho_{\ell_{\q}(L_{\p})}\bigg( \frac{c_1^{-1}}{\phi(P)} \sum_{\nu=j+1}^{\infty} 2^{-(k-j)D_1}\, g_{k} \, \chi_P  \bigg)\nonumber \\
		& \leq \sum_{j=(j_P \vee 0)}^\infty \sum_{k=j+1}^\infty \inf \bigg\{ \lambda \in (0, \infty): c_1^{-1}\, c(\varepsilon) \, 2^{-(k-j)(D_1-\varepsilon)} \Big\| \frac{g_{k} \, \chi_P}{\phi(P) \lambda^{1/\q}}  \mid \Lp \Big\|\leq 1 \bigg\}\\
		& \leq c_1^{-1}\, c(\varepsilon) \sum_{k=(j_P \vee 0)}^\infty \sum_{j = (j_P \vee 0)}^k 2^{-(k-j)(D_1-\varepsilon)} \inf \bigg\{ \lambda \in (0, \infty):   \Big\| \frac{g_{k} \, \chi_P}{\phi(P) \lambda^{1/\q}}  \mid \Lp \Big\|\leq 1 \bigg\}\\
		& \leq  \sum_{k=(j_P \vee 0)}^\infty \inf \bigg\{ \lambda \in (0, \infty):   \Big\| \frac{g_{\nu} \, \chi_P}{\phi(P) \lambda^{1/\q}}  \mid \Lp \Big\|\leq 1 \bigg\}\\
		&\leq 1,
	\end{align*}
	doing a convenient change of variables and with the choice of $ c_1 = c(\varepsilon) \, c(D_1-\varepsilon)$. The first part is then proved. \\
	
	{\it Step 2.} We prove now that $I_2(P)\leq c_2$, for some $c_2>0$. We proceed similarly as before, and get, for $0<\varepsilon<\min\{D_2, D_2 - \log_2 \tilde{c}_1(\phi)\}$,
	\begin{align*}
	&\varrho_{\ell_{\q}(L_{\p})}\bigg( \frac{c_2^{-1}}{\phi(P)} \sum_{\nu=0}^{j} 2^{-(j-k)D_2}\, g_{k} \, \chi_P  \bigg)\nonumber \\
	& \leq \sum_{j=(j_P \vee 0)}^\infty \sum_{k=0}^j \inf \bigg\{ \lambda \in (0, \infty): c_2^{-1}\, c(\varepsilon) \, 2^{-(j-k)(D_2-\varepsilon)} \Big\| \frac{g_{k} \, \chi_P}{\phi(P) \lambda^{1/\q}}  \mid \Lp \Big\|\leq 1 \bigg\}\\
	& \leq \sum_{k=0}^\infty \sum_{j = (j_P \vee 0\vee k)}^\infty c_2^{-1}\, c(\varepsilon) \, 2^{-(j-k)(D_2-\varepsilon)} \inf \bigg\{ \lambda \in (0, \infty):   \Big\| \frac{g_{k} \, \chi_P}{\phi(P) \lambda^{1/\q}}  \mid \Lp \Big\|\leq 1 \bigg\}\\
	& = \sum_{k=0}^{(j_P \vee 0)-1} \sum_{j = (j_P \vee 0)}^\infty \quad ... \quad + \quad  \sum_{k=(j_P \vee 0)}^{\infty} \sum_{j = k}^\infty \quad ...\\
	&=: H_1(P) + H_2(P)
	\end{align*}
	for the same $c(\varepsilon)$ as before. For $H_2(P)$, after a proper change of variables, and choosing $c_2~\geq \bar{c}:=~2\,c(\varepsilon)\, c(D_2-\varepsilon)$, we have
	\begin{align*}
	H_2(P) & = c_2^{-1}\, c(\varepsilon) \sum_{k=(j_P \vee 0)}^{\infty} \sum_{j = k}^\infty  2^{-(j-k)(D_2-\varepsilon)} \inf \bigg\{ \lambda \in (0, \infty):   \Big\| \frac{g_{k} \, \chi_P}{\phi(P) \lambda^{1/\q}}  \mid \Lp \Big\|\leq 1 \bigg\}\\
	& =c_2^{-1}\, c(\varepsilon) \, c(D_2-\varepsilon) \sum_{k=(j_P \vee 0)}^{\infty} \inf \bigg\{ \lambda \in (0, \infty):   \Big\| \frac{g_{k} \, \chi_P}{\phi(P) \lambda^{1/\q}}  \mid \Lp \Big\|\leq 1 \bigg\}\\	
	& \leq \frac{1}{2} \, \varrho_{\ell_{\q}(L_{\p})}\bigg(  \frac{g_k \chi_P}{\phi(P)}\bigg)\\
	& \leq \frac{1}{2}.
	\end{align*}
	
	As for $H_1(P)$, we proceed similarly. We then get
	\begin{align*}
	&H_1(P)  = c_2^{-1}\, c(\varepsilon) \sum_{k=0}^{(j_P \vee 0)-1} \sum_{j = (j_P \vee 0)}^\infty  2^{-(j-k)(D_2-\varepsilon)} \inf \bigg\{ \lambda \in (0, \infty):   \Big\| \frac{g_{k} \, \chi_P}{\phi(P) \lambda^{1/\q}}  \mid \Lp \Big\|\leq 1 \bigg\}\\
	&=c_2^{-1}\, c(\varepsilon) \sum_{k=0}^{(j_P \vee 0)-1} 2^{k(D_2-\varepsilon)} \sum_{j = (j_P \vee 0)}^\infty  2^{-j(D_2-\varepsilon)} \inf \bigg\{ \lambda \in (0, \infty):   \Big\| \frac{g_{k} \, \chi_P}{\phi(P) \lambda^{1/\q}}  \mid \Lp \Big\|\leq 1 \bigg\}\\
	& =\frac{c_2^{-1}\, c(\varepsilon)}{1-2^{-(D_2-\varepsilon)}} \sum_{k=(j_P \vee 0)}^{\infty} 2^{-((j_P \vee 0) -k)(D_2-\varepsilon)}\frac{1}{\phi(P)} \inf \bigg\{ \lambda \in (0, \infty):   \Big\| \frac{g_{k} \, \chi_P}{ \lambda^{1/\q}}  \mid \Lp \Big\|\leq 1 \bigg\}\\	
	& \leq \frac{c_2^{-1}\, c(\varepsilon)}{1-2^{-(D_2-\varepsilon)}} \sum_{k=0}^{(j_P \vee 0)-1} 2^{-((j_P \vee 0) -k)(D_2-\varepsilon-\log_2 \tilde{c}_1(\phi))} \varrho_{\ell_{\q}(L_{\p})}\bigg(  \frac{g_k \chi_R}{\phi(R)}\bigg)\\
	& \leq \frac{1}{2}, 
	\end{align*}
	with $R=Q(c_P, 2^{(\j_P \vee 0) - k -j_P})$ and choosing $\displaystyle c_2 \geq \tilde{c}:= \frac{2 \, c(\varepsilon) \, c(D_2-\varepsilon-\log_2 \tilde{c}_1(\phi))}{1-2^{-(D_2-\varepsilon)}}$. In the fourth step we have used \eqref{phi-cond1} and the fact that
	$$
	\inf \bigg\{ \lambda \in (0, \infty):   \Big\| \frac{g_{k} \, \chi_P}{ \lambda^{1/\q}}  \mid \Lp \Big\|\leq 1 \bigg\} \leq \inf \bigg\{ \lambda \in (0, \infty):   \Big\| \frac{g_{k} \, \chi_R}{ \lambda^{1/\q}}  \mid \Lp \Big\|\leq 1 \bigg\}.	
	$$
	We then obtain $I_2(P)\leq c_2$ by considering $c_2 \geq \max\{\bar{c}, \tilde{c}\}$, as we wanted to prove.  
	
\end{proof}
\begin{remark} Naturally, this statement holds also true if the indices $k$ and $\nu$ run only over natural numbers.
\end{remark}

\medskip

\subsection{Variable 2-microlocal Besov-type and Triebel-Lizorkin-type spaces}

We will present now the definition of the spaces under consideration in this paper. To do this, we start by introducing the notions of \textit{admissible weight sequence} and \textit{admissible pair of functions}. 

\begin{definition}\label{def-ad-weight} Let $\alpha\geq 0$ and $\alpha_1,\alpha_2\in\real$ with $\alpha_1\leq \alpha_2$. A
	sequence of non-negative measurable functions in $\rn$ $\bm{w}=(w_j)_{j\in\nat_0}$ belongs to the class $\mathcal{W}^{\alpha}_{\alpha_1,\alpha_2}(\rn)$ if the following conditions are satisfied:
	\begin{list}{}{\labelwidth1.3em\leftmargin2.3em}
		\item[{\upshape (i)\hfill}] There exists a constant $c>0$ such that
		$$
		0<w_j(x)\leq c\,w_j(y)\,(1+2^j|x-y|)^{\alpha}\quad \mbox{for all} \;\, j\in\nat_0 \;\; \mbox{and all} \;\, x,y\in\rn.
		$$
		\item[{\upshape (ii)\hfill}] For all $j\in\nat_0$ it holds
		$$
		2^{\alpha_1}\,w_j(x)\leq w_{j+1}(x)\leq 2^{\alpha_2}\,w_j(x) \quad \mbox{for all}\;\, x \in \rn.
		$$
	\end{list}
	Such a system $(w_j)_{j\in\nat_0}\in\mathcal{W}^{\alpha}_{\alpha_1,\alpha_2}(\rn)$ is called admissible weight sequence.\\
\end{definition}

Properties of admissible weights may be found in \cite[Remark~2.4]{Kem08}.

\medskip
 \begin{definition}
 	We say that a pair $(\varphi, \varphi_0)$ of functions in $\mathscr{S}(\rn)$ is admissible if
 	\begin{equation} \label{eq:adm-pair1}\supp \widehat{\varphi} \subset \{ \xi \in \rn: \frac12 \leq |\xi|\leq 2\}\hbox{ \;and \;}|\widehat{\varphi}(\xi)|>0 \mbox{\; when \;}\frac35\leq |\xi|\leq \frac53
 	\end{equation}
 	and
 	\begin{equation}\label{eq:adm-pair2}
 	\supp \widehat{\varphi}_0 \subset \{ \xi \in \rn: |\xi|\leq 2\}\mbox{ \; and \;}|\widehat{\varphi}_0(\xi)|>0\hbox{\; when \;}|\xi|\leq \frac53.
 	\end{equation}
 	Further, we set $\varphi_j(x):= 2^{jn}\varphi(2^j x)$ for $j \in \nat$. Then $(\varphi_j)_{j \in \nat_0} \subset \mathscr{S}(\rn)$ and
 	\begin{equation}
 		\supp \varphi_j \subset \{ x \in \rn: 2^{j-1} \leq |x| \leq 2^{j+1}\}.\nonumber
 	\end{equation}
 \end{definition}

We are finally in a position to introduce variable 2-microlocal Besov-type and Triebel-Lizorkin-type spaces. 

\begin{definition} \label{def-spaces}
Let $(\varphi, \varphi_0)$ be a pair of admissible functions on $\rn$. Let $p, q \in \Plog$, $\bm{w}=(w_j)_{j\in\nat_0} \in \mathcal{W}^{\alpha}_{\alpha_1,\alpha_2}(\rn)$ and $\phi\in \mathcal{G}(\real^{n+1}_+)$. 
\begin{list}{}{\labelwidth1.3em\leftmargin2.3em}
	\item[{\upshape (i)\hfill}] The variable 2-microlocal Besov-type space $\Bphi$ is defined to be the set of all $f \in \SSn$ such that 
	$$
	\|f \mid \Bphi\| :=  \left\| \big( w_j(\cdot) (\varphi_j \ast f)(\cdot) \big)_{j\in \nat_0}  \mid \SeqB \right\|<\infty. \notag 
	$$
	\item[{\upshape (ii)\hfill}] Assume $p^+, q^+< \infty$. The variable 2-microlocal Triebel-Lizorkin-type space $\Fphi$ is defined to be the set of all $f \in \SSn$ such that 
		$$
	\|f \mid \Fphi\| :=  \left\| \big( w_j(\cdot) (\varphi_j \ast f)(\cdot) \big)_{j\in \nat_0}  \mid \SeqF \right\|<\infty. \notag 
	$$
\end{list}
\end{definition}
\begin{remarks} \label{rmk-examples}
\begin{list}{}{\labelwidth1.3em\leftmargin2.3em}
	\item[{\upshape (i)\hfill}] These spaces were introduced by Wu et al. in \cite{WYY18}, where the authors have proved the independence of the spaces on the admissible pair. 
	\item[{\upshape (ii)\hfill}] In the particular case of $w_j(\cdot)=2^{j s(\cdot)}$, with $s \in C^{\log}_{\rm{loc}}(\rn)$, we recover $A_{\p,\q}^{\s, \phi}(\rn)$, $A \in \{B, F\}$, introduced in \cite{YZY15-B} and \cite{YZY15-F} and also investigated in \cite{GM18}. 	
	\item[{\upshape (iii)\hfill}]  When $\phi\equiv 1$,  then  $A_{\p,\q}^{\bm{w}, \phi}(\rn)= A_{\p,\q}^{\bm{w}}(\rn)$,  $A \in \{B, F\}$, are the 2-microlocal Besov and Triebel-Lizorkin spaces with variable exponents. For a good overview on this scale we recommend \cite{AC16} and \cite{AC16-1}.
	\item[{\upshape (iv)\hfill}]  When $\p=p$, $\q=q$ are constant exponents, $w_j(\cdot)=2^{js}, j \in \nat_0$, and $\phi(Q):=|Q|^{\tau}$  for all cubes $Q$ and $\tau\in[0,\infty)$, then  $A_{\p,\q}^{\bm{w}, \phi}(\rn)= A_{p,q}^{s,\tau}(\rn)$, $A \in \{B, F\}$, are the Besov-type and Triebel-Lizorkin-type spaces introduced by Yuan et al. in \cite{YSY10}.\\ 
\end{list}
\end{remarks}


\section{Maximal functions and local means characterization}
Let $(\psi_j)_{j \in \no}$ be a sequence in  $\mathscr{S}(\rn)$. For each $f \in \mathscr{S}'(\rn)$ and $a>0$, the Peetre's maximal
functions were defined by Peetre in \cite{Pee75} by
$$
(\psi_j^{*}f)_a(x):= \sup_{y \in
	\mathbb{R}^n}\frac{|\psi_j \ast f(y)|}{(1+|2^j(x-y)|)^a},
\quad x \in \mathbb{R}^n, \;j \in \no.
$$

In \cite[Theorems~4.5 and 4.7]{WYY18}, the authors proved a characterization of $\Bphi$ and $\Fphi$ using the Peetre's maximal functions, but where the sequence $(\psi_j)_{j \in \nat_0}$ is the same as in Definition \ref{def-spaces}, which is built upon an admissible pair. Here we intend to extend those results, by showing that they still hold if one considers more general pairs of functions. Additionally, we also prove that one can replace the admissible pairs in the definition of $\Bphi$ and $\Fphi$ by more general ones (in terms of equivalent quasi-norms). The main result of this section reads then as follows. 

\begin{theorem} \label{Thm:Peetre}
	Let $p, q \in \Plog$,  $\bm{w}=(w_j)_{j\in\nat_0} \in \mathcal{W}^{\alpha}_{\alpha_1,\alpha_2}(\rn)$ and $\phi\in \mathcal{G}(\real^{n+1}_+)$. Let $R\in\no$ with
	$ R>\alpha_2 + \max\{0, \log_2\tilde{c_1}(\phi)\}$, where $\tilde{c_1}(\phi)$ is the constant in \eqref{phi-cond1}, and let $\psi_0,\psi\in \mathscr{S}(\rn)$ be such that
	\begin{equation}  \label{cond-psi-0}
	(D^{\beta} \widehat{\psi} )(0)=0 \quad \text{for} \quad 0\leq |\beta|<R
	\end{equation}
	and
	\begin{equation} \label{cond-psi-1}
	|\widehat{\psi}_0(\xi)|>0 \quad \text{on} \quad \{\xi\in\rn:|\xi|\leq k\varepsilon\},
	\end{equation}
	\begin{equation}  \label{cond-psi-2}
	|\widehat{\psi}(\xi)|>0 \quad \text{on} \quad  \bigl\{\xi\in\rn:\frac{\varepsilon}{2}\leq|\xi|\leq k\varepsilon\bigr\},
	\end{equation}
	for some $\varepsilon>0$ and $k\in]1,2]$. 
	\begin{list}{}{\labelwidth1.3em\leftmargin2.3em}
		\item[{\upshape (i)\hfill}] For 
	$$a>\frac{n}{p^-}+c_{\log}\left(\frac1q\right)+\alpha+ \max\{0, \log_2\tilde{c_1}(\phi)\},$$ 
	we have 
	$$
	\|f \mid \Bphi \|  \sim \big\| \bigl(2^{j s(\cdot)} (\psi_j^{*}f)_a \bigr)_{j\in \nat_0} \mid \SeqB \big\| \sim \big\| \bigl(2^{j s(\cdot)} (\psi_j \ast f) \bigr)_{j\in \nat_0} \mid \SeqB \big\|
	$$
	for all  $f \in \mathscr{S}'(\rn)$.
	\item[{\upshape (ii)\hfill}] Assume $p^+, q^+<\infty$. For 
	$$a>\frac{n}{\min\{p^-,q^-\}}+\alpha+\max\{0, \log_2\tilde{c_1}(\phi)\},$$ 
	we have 
	$$
	\|f \mid \Fphi \|  \sim \big\| \bigl(2^{j s(\cdot)} (\psi_j^{*}f)_a \bigr)_{j\in \nat_0} \mid \SeqF \big\| \sim \big\| \bigl(2^{j s(\cdot)} (\psi_j \ast f) \bigr)_{j\in \nat_0} \mid \SeqF \big\|
	$$
	for all  $f \in \mathscr{S}'(\rn)$.
	\end{list}
\end{theorem}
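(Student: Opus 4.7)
The plan is to adapt the Rychkov--Ullrich reproducing-formula scheme to the present weighted, variable-exponent, $\phi$-modified setting, and to close the argument using Lemmas~\ref{eta-phi} and~\ref{conv-ineq}. The inequality $\|(w_j\psi_j\ast f)_j\mid\SeqB\|\le\|(w_j(\psi_j^{*}f)_a)_j\mid\SeqB\|$, as well as its $\SeqF$ analogue, is immediate from the pointwise bound $|\psi_j\ast f|\le(\psi_j^{*}f)_a$; the substance lies in the reverse direction and in the identification with the $(\varphi_0,\varphi)$-based quasi-norm.

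\textbf{Core pointwise step.} Using the Tauberian conditions \eqref{cond-psi-1}--\eqref{cond-psi-2}, one first produces a pair $\Phi_0,\Phi\in\Sn$ with compactly supported Fourier transforms, $\widehat{\Phi}$ vanishing to order $R$ at the origin, and a Calder\'on-type resolution
\[
f=\sum_{k=0}^{\infty}\Phi_k\ast\psi_k\ast f\quad\text{in }\SSn.
\]
Writing $\psi_j\ast f=\sum_{k}(\psi_j\ast\Phi_k)\ast(\psi_k\ast f)$ and combining the moment condition \eqref{cond-psi-0} with Schwartz decay yields the kernel estimate $|(\psi_j\ast\Phi_k)(z)|\lesssim 2^{-|j-k|R}\,\eta_{j\wedge k,M}(z)$ for every $M>0$. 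Introducing a small parameter $t\in(0,1]$ and invoking the classical sub-mean-value inequality for band-limited distributions then leads to
\[
[(\psi_j^{*}f)_a(x)]^t\lesssim\sum_{k=0}^{\infty}2^{-|j-k|Rt}\bigl(\eta_{j\wedge k,Mt}\ast|\psi_k\ast f|^{t}\bigr)(x),
\]
valid whenever $at$ is large. Applying Definition~\ref{def-ad-weight}(i)--(ii) to exchange $w_j(x)^t$ with $w_k(y)^t$ (absorbing the spatial factor $(1+|2^{j\wedge k}(x-y)|)^\alpha$ into $\eta_{j\wedge k,M}$ and picking up a harmless $2^{c|j-k|t}$ in the discrete coefficient) then gives
\[
[w_j(x)(\psi_j^{*}f)_a(x)]^{t}\lesssim\sum_{k=0}^{\infty}2^{-|j-k|Dt}\bigl(\eta_{j\wedge k,M'}\ast|w_k\psi_k\ast f|^{t}\bigr)(x)
\]
for suitable $D,M'>0$.

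\textbf{Sequence-norm step and conclusion.} Taking $1/t$-th powers and using Remark~\ref{rmk:mixed}(iii) to rescale to $\ell_{\q/t}(L_{\p/t})$ or $L_{\p/t}(\ell_{\q/t})$ (where $p^-/t\ge 1$ is arranged by choosing $t$ sufficiently small), Lemma~\ref{eta-phi}(i) respectively (ii) disposes of the $\eta_{j\wedge k,M'}$ factor, and Lemma~\ref{conv-ineq} then handles the remaining discrete convolution in $k$. This yields
\[
\|(w_j(\psi_j^{*}f)_a)_j\mid\SeqB\|\lesssim\|(w_k\psi_k\ast f)_k\mid\SeqB\|
\]
and the analogous inequality for $\SeqF$. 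Since the admissible pair $(\varphi_0,\varphi)$ itself satisfies the hypotheses on $(\psi_0,\psi)$ with $R$ arbitrary (because $\widehat\varphi$ is supported away from the origin), the same argument applied with $(\varphi_0,\varphi)$ in place of $(\psi_0,\psi)$ identifies $\|f\mid\Bphi\|$ with its Peetre analogue; running the estimate in both directions closes all three equivalences, and the Triebel--Lizorkin part is analogous.

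\textbf{Main obstacle.} The delicate point is the joint calibration of $t$, $R$, and $M$: $t$ must be small enough that $at>n$ for the sub-mean step and that Lemma~\ref{eta-phi} applies on the rescaled mixed sequence spaces, while the products $Rt$ and $M't$ must still exceed the thresholds $\log_2\tilde c_1(\phi)$ from Lemma~\ref{conv-ineq} and $n+c_{\log}(1/q)+\log_2\tilde c_1(\phi)$ from Lemma~\ref{eta-phi}(i). Tracking these perturbations, together with the shifts coming from $\alpha,\alpha_1,\alpha_2$ in the weight sequence, produces exactly the lower bound on $a$ stated in part~(i); in part~(ii), Lemma~\ref{eta-phi}(ii) replaces $p^-$ by $\min\{p^-,q^-\}$ and removes the $c_{\log}(1/q)$ contribution.
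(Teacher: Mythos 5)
Your proposal is essentially the approach the paper indicates: the paper establishes Theorem~\ref{Thm:Peetre} only through a remark deferring to Rychkov's reproducing-formula scheme~\cite{Ryc99} and its variable-exponent adaptation in~\cite[Theorem~3.2, Remark~3.9]{GM18}, singling out Lemmas~\ref{eta-phi} and~\ref{conv-ineq} as the tools adapted to the $\phi$-modified sequence spaces, which is exactly the Calder\'on-resolution/sub-mean-value/$\eta$-convolution chain you sketch. Your outline correctly identifies the calibration of $t$, the moment order $R$, and the extra $\max\{0,\log_2\tilde c_1(\phi)\}$ shift as the only genuinely new bookkeeping relative to the $\phi\equiv 1$ case.
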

\begin{remark}\label{remark-1}
	\begin{list}{}{\labelwidth1.3em\leftmargin2.3em}
		\item[{\upshape (i)\hfill}] The conditions \eqref{cond-psi-0} are usually called moment conditions, while \eqref{cond-psi-1} and \eqref{cond-psi-2} are the so-called Tauberian conditions. If $R=0$ then no moment conditions \eqref{cond-psi-0} on $\psi$ are required. 
		\item[{\upshape (ii)\hfill}] The case $\phi\equiv1$ is covered by \cite[Theorem~3.1(ii)]{AC16},  where we can find also a discussion on the importance of having a $k$ in the  conditions  \eqref{cond-psi-1}  and  \eqref{cond-psi-2}  in contrast with the case $k=2$ usually  found in the literature in such type of result, cf. e.g. \cite{Kem09,KV12,Ryc99}. 
		\item[{\upshape (iii)\hfill}] When $\p=p$, $\q=q $ are constant exponents,  $\phi(Q):=|Q|^{\tau}$  for all cubes $Q$ and $\tau\in[0,\infty)$, and  $w_j(x)=2^{js}$ for all $x \in \rn$, $j \in \nat_0$ and  $s \in \real$, then $\log_2\tilde{c_1}(\phi)=n\tau$ and such a characterization with $k=2$ has been already established in the homogeneous case by Yang and Yuan, cf. \cite[Theorem~2.1(ii)]{YY10}. 
		\item[{\upshape (iv)\hfill}]In \cite{WYY18} the authors proved the independence of the spaces from the admissible pair as a consequence of the $\varphi$-transform characterization. The above theorem provides an alternative proof, since an admissible pair satisfies conditions \eqref{cond-psi-1} and \eqref{cond-psi-2} with $\varepsilon=\frac 65$ and $k=\frac{25}{18}$. Moreover, it becomes clear that  $\Bphi$ and $\Fphi$ can be defined using more general pairs than the admissible ones (in the sense of equivalent quasi-norms). We refer in particular to the case stated in Corollary \ref{cor-localmeans} below.
	\end{list}
\end{remark}

\medskip 

\begin{remark} The proof of Theorem \ref{Thm:Peetre} can be carried out following the proof done by Rychkov \cite{Ryc99} in the classical case. Part (ii) is an extension of \cite[Theorem~3.2]{GM18}, where the spaces $F^{\s, \phi}_{\p,\q}(\rn)$ were considered. As noticed in \cite[Remark~3.9]{GM18}, the proof can easily be adapted for the more general scale $\Fphi$. In a similar way, one can prove the result for the variable 2-microlocal Besov-type spaces $\Bphi$. In this case, the discrete convolution inequality stated in Lemma \ref{eta-phi}(i) is of great importance. \\
\end{remark}

Lastly in this section we present an important application of Theorem  \ref{Thm:Peetre}, that is when $\psi_0$ and $\psi$, satisfying \eqref{cond-psi-0}-\eqref{cond-psi-2}, are local means. The name comes from the compact support of  $\psi_0:=k_0$ and $\psi:=k$, which is admitted in the following statement.

\begin{corollary} \label{cor-localmeans} 
	Let $p, q \in \mathcal{P}^{\log}(\rn)$ (with $p^+, q^+<\infty$ in the $F$-case), $\bm{w}=(w_j)_{j\in\nat_0} \in \mathcal{W}^{\alpha}_{\alpha_1,\alpha_2}(\rn)$ and $\phi\in \mathcal{G}(\real^{n+1}_+)$. 
	For given  $N\in\no$ and $d>0$, let  $k_0, k\in \mathscr{S}(\rn)$ with $\supp k_0, \supp k\subset d Q_{0,0}$,
	\begin{equation}\label{loc-means:1}
	(D^{\beta}  \widehat{k})(0) = 0 \quad \text{if} \quad 0\leq |\beta|<N,
	\end{equation}
	$\widehat{k_0}(0)\neq 0$ and $\widehat{k}(x)\neq 0$\  if \ $0<|x|<\varepsilon$, for some $\varepsilon>0$.
	If $N>\alpha_2+\max\{0,\log_2\tilde{c_1}(\phi)\}$, then 
	$$
	\|f \mid \Bphi\| 
	\sim \big\| \bigl(2^{j s(\cdot)} (k_j \ast f) \bigr)_{j \in \nat_0} \mid \SeqB \big\|
	$$
	and
	$$
	\|f \mid \Fphi\| 
	\sim \big\| \bigl(2^{j s(\cdot)} (k_j \ast f) \bigr)_{j \in \nat_0} \mid \SeqF \big\|
	$$
	for all  $f \in \mathscr{S}'(\rn)$. 
\end{corollary}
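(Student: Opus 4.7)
The plan is to reduce the corollary to a direct application of Theorem~\ref{Thm:Peetre} by verifying that $k_0$ and $k$ can play the role of $\psi_0$ and $\psi$ in that theorem, upon a suitable choice of the parameters $\varepsilon>0$ and (to avoid clash with the function $k$) $\widetilde{k}\in(1,2]$ appearing in conditions \eqref{cond-psi-1}--\eqref{cond-psi-2}. The compact support assumption $\supp k_0,\supp k\subset dQ_{0,0}$ plays no role whatsoever in obtaining the norm equivalences; it only guarantees the locality of the convolutions $k_j\ast f$, which motivates the name \emph{local means}.

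First, the moment assumption \eqref{loc-means:1} on $k$ is literally condition \eqref{cond-psi-0} of Theorem~\ref{Thm:Peetre} with $R=N$, and the threshold $N>\alpha_2+\max\{0,\log_2\widetilde{c}_1(\phi)\}$ of the corollary matches the threshold $R>\alpha_2+\max\{0,\log_2\widetilde{c}_1(\phi)\}$ of the theorem. Only the Tauberian conditions \eqref{cond-psi-1}--\eqref{cond-psi-2} remain. Denote temporarily by $\varepsilon_0>0$ the number furnished by the corollary (so $\widehat{k}(\xi)\neq 0$ whenever $0<|\xi|<\varepsilon_0$). Since $k_0\in\Sn$, the Fourier transform $\widehat{k_0}$ is continuous on $\rn$ and, because $\widehat{k_0}(0)\neq 0$, there exists $\delta>0$ with $|\widehat{k_0}(\xi)|>0$ for all $|\xi|\leq \delta$. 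I then set
$$
\widetilde{k}:=2\qquad\text{and}\qquad \varepsilon:=\tfrac{1}{2}\min\{\delta,\varepsilon_0\}>0.
$$
With these choices, $|\widehat{k_0}(\xi)|>0$ on $\{|\xi|\leq \widetilde{k}\varepsilon\}=\{|\xi|\leq \min\{\delta,\varepsilon_0\}\}\subset\{|\xi|\leq\delta\}$, which yields \eqref{cond-psi-1}, while $|\widehat{k}(\xi)|>0$ for all $\xi$ with $\varepsilon/2\leq|\xi|\leq \widetilde{k}\varepsilon$, because this annulus is contained in $\{0<|\xi|<\varepsilon_0\}$, which gives \eqref{cond-psi-2}.

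With $(\psi_0,\psi)=(k_0,k)$ and the parameters above, the hypotheses of Theorem~\ref{Thm:Peetre} are satisfied in both parts (i) and (ii). Picking any admissible $a$ in each case and taking the last equivalence asserted there, we obtain
$$
\|f\mid\Bphi\|\sim\big\|\bigl(w_j(\cdot)(k_j\ast f)\bigr)_{j\in\nat_0}\mid\SeqB\big\|
$$
and the analogous $F$-statement, which is exactly the conclusion of the corollary. Since the entire argument is a matching of hypotheses, I do not anticipate a genuine obstacle: the only subtlety is the simultaneous choice of $\varepsilon$ and $\widetilde{k}$ that makes both Tauberian conditions compatible with a single pair of parameters, and this is handled by the elementary choice above.
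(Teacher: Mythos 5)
Your reduction to Theorem~\ref{Thm:Peetre} is the right strategy and is in substance the same as the paper's, but you work out a step the paper leaves implicit: the paper's proof concentrates on exhibiting a pair $k_0, k=\Delta^M k^0$ with $2M\geq N$ (thereby showing that the hypotheses of the corollary are non-vacuous and that the moment and Tauberian conditions are automatic for that specific choice), and then simply declares ``the rest is a direct consequence of Theorem~\ref{Thm:Peetre}''. You instead take an arbitrary pair $(k_0,k)$ satisfying the stated hypotheses and check directly that the Tauberian conditions \eqref{cond-psi-1}--\eqref{cond-psi-2} can be met by a suitable choice of parameters, which is the logically cleaner reading of the corollary as stated. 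Your observation that the compact support assumption plays no role in the norm equivalence (only in the ``local'' interpretation of $k_j\ast f$) is correct and worth making explicit.

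One small flaw: with your choice $\widetilde k=2$ and $\varepsilon=\tfrac12\min\{\delta,\varepsilon_0\}$ you get $\widetilde k\,\varepsilon=\min\{\delta,\varepsilon_0\}$, so if $\delta\geq\varepsilon_0$ the annulus $\{\varepsilon/2\leq|\xi|\leq\widetilde k\,\varepsilon\}$ touches the sphere $|\xi|=\varepsilon_0$, on which the hypothesis $\widehat k(\xi)\neq 0$ for $0<|\xi|<\varepsilon_0$ gives no information. Replace $\varepsilon$ by, say, $\tfrac13\min\{\delta,\varepsilon_0\}$ (or any choice that makes $\widetilde k\,\varepsilon$ strictly smaller than both $\delta$ and $\varepsilon_0$) and the argument closes without further ado.
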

\begin{proof}
	It is clear the existence of   $k_0, k^0\in \mathscr{S}(\rn)$ with $\supp k_0, \supp k^0\subset d Q_{0,0}$, $\widehat{k_0}(0)\neq 0$ and $\widehat{k^0}(0)\neq 0$. 
	Then, following \cite[11.2]{Tri97} and taking $M\in\no$ with $2M\geq N$ define $k:=\Delta^M k^0$. Since $\widehat{k}(x)=(-\sum_{i=1}^n|x_i|^2)^M\widehat{k^0}(x)$, we immediately have \eqref{loc-means:1} and $\widehat{k}(x)\neq 0$\  if \ $0<|x|<\varepsilon$, for a small enough $\varepsilon>0$. The  rest is a direct consequence of  Theorem \ref{Thm:Peetre}. 
\end{proof}

\section{Non-smooth atomic decomposition}

In \cite{WYY18} the authors obtained a characterization of the spaces $\Bphi$ and $\Fphi$ by smooth atomic decompositions, generalizing previous results obtained in \cite{YZY15-B,YZY15-F,Dri13,Kem10,YSY10} for  the particular cases described in 
Remark \ref{rmk-examples}. We recall the result from \cite{WYY18} and start by defining smooth atoms and appropriate sequence spaces, where we opted here for a different normalization.

\begin{definition} \label{def-SA}
Let $K,L\in\no$. A function $a_Q\in C^K(\rn)$ is called a $[K,L]$-smooth atom centered at  $Q:=Q_{\nu k}\in\mathcal{Q}$, where $\nu\in\no$ and $k\in\zn$, if 
  $$\supp a_Q \subset 3\,Q,$$
\begin{equation} \label{SA-cond2}
 \|a_Q(2^{-\nu} \cdot) \mid C^K(\rn) \| \leq 1,
\end{equation}
and, when $\nu\in\nat$,  
\begin{equation}  \label{SA-cond3}
   \int_{\rn} x^{\gamma} a_Q(x) dx=0,
\end{equation}
for all multi-indices $\gamma\in \no^n$ with $|\gamma|<L$.
\end{definition}

\begin{remark} As usual when $L=0$ no moment conditions are required by \eqref{SA-cond3}.
\end{remark}

\begin{definition} Let $p, q \in \Plog$, $\bm{w}=(w_\nu)_{\nu\in\nat_0} \in \mathcal{W}^{\alpha}_{\alpha_1,\alpha_2}(\rn)$ and $\phi\in \mathcal{G}(\real^{n+1}_+)$. 
	\begin{list}{}{\labelwidth1.3em\leftmargin2.3em}
		\item[{\upshape (i)\hfill}] The sequence space $\bphi$ is defined as the set of all sequences $t:= \{t_Q\}_{Q \in \mathcal{Q}^*} \subset \mathbb{C}$ such that
		\begin{equation*}
		\|t \mid \bphi\| := \Big\| \Big( \sum_{m \in \zn}  w_\nu(2^{-\nu}k) |t_{Q_{\nu m}}| \chi_{Q_{\nu m}} \Big)_{\nu \in \nat_0} \mid \ell_{\q}^{\phi}(L_{\p})\Big\|<\infty.
		\end{equation*}
		\item[{\upshape (ii)\hfill}] Assume $p^+, q^+<\infty$. The sequence space $\fphi$ is defined as the set of all sequences $t:= \{t_Q\}_{Q \in \mathcal{Q}^*} \subset \mathbb{C}$ such that
		\begin{equation*}
		\|t \mid \fphi\| := \Big\| \Big( \sum_{m \in \zn}  w_\nu(2^{-\nu}k) |t_{Q_{\nu m}}| \chi_{Q_{\nu m}} \Big)_{\nu \in \nat_0} \mid L_{\p}^{\phi}(\ell_{\q})\Big\|<\infty.
		\end{equation*}
	\end{list}
\end{definition}



Next we present the notion of non-smooth atoms already used in \cite{GK16} in the context of 2-microlocal spaces with variable exponents and which were slightly adapted  from \cite{Sch13}. 
Note that the usual parameters $K$ and $L$ are now non-negative real numbers instead of non-negative integer numbers.

\begin{definition} \label{def:non-smooth-atoms}
Let $K, L\geq 0$. A function  $a_Q: \rn \rightarrow \mathbb{C}$ is called a  $[K, L]$-non-smooth atom centered at $Q:=Q_{\nu k} \in \mathcal{Q}$, with $\nu \in \nat_0$ and $k \in \zn$, if
\begin{equation}  \label{NSA-cond1}
\supp a_Q \subset 3\,Q,
\end{equation}
\begin{equation}  \label{NSA-cond2}
\|a_Q(2^{-\nu} \cdot) \mid \mathscr{C}^K(\rn) \| \leq 1,
\end{equation}
and for every $\psi \in \mathscr{C}^L(\rn)$ it holds 
\begin{equation}  \label{NSA-cond3}
\Big| \displaystyle \int_{\rn} \psi(x) a_Q(x) dx\Big| \leq c \, 2^{-\nu(L + n)}\| \psi \mid \mathscr{C}^L(\rn)\|.
\end{equation}
\end{definition}

\begin{remark} \label{compare-atoms}
Since $C^k(\rn)\hookrightarrow \mathscr{C}^k(\rn)$ for  $k\in\no$, it is clear that condition \eqref{NSA-cond2}  follows from \eqref{SA-cond2}. Moreover, using a Taylor expansion, \eqref{NSA-cond3}  can be derived from \eqref{SA-cond3} when $L\in\nat$, cf. \cite[Remark~3.4]{Sch13}. Therefore, when $K,L\in\no$, any $[K, L]$-smooth atom  is a $[K, L]$-non-smooth atom. Moreover, both conditions \eqref{NSA-cond2} and \eqref{NSA-cond3}  are ordered in $K$ and $L$, i.e. the conditions are stricter for increasing $K$ and $L$, see  \cite[Remark.~3.4]{Sch13}. 
\end{remark}

For the next two auxiliary results we refer to \cite[Lemmas~3.6,3.7]{GK16}.
\begin{lemma}
Let $k_j$ be the local means according to Corollary \ref{cor-localmeans} with $d=3$. Then  $c2^{-jn}\, k_j$ is a non-smooth $[K,L]$-atom centered at $Q_{j 0}$, for some constant $c>0$ independently of $j$ and for arbitrary large $K>0$ and  $L\leq N+1$.
\end{lemma}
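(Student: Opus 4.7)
The plan is to verify the three defining conditions of Definition \ref{def:non-smooth-atoms} directly for $a_Q := c\,2^{-jn} k_j$ centered at $Q:=Q_{j0}$, choosing a constant $c>0$ small enough (depending on $K$ but \emph{not} on $j$).

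The support condition \eqref{NSA-cond1} is immediate from scaling: since $\supp k \subset dQ_{0,0} = 3Q_{0,0}$ and $k_j(x) = 2^{jn} k(2^j x)$, one has $\supp a_Q = 2^{-j}\supp k \subset 2^{-j}(3Q_{0,0}) = 3Q_{j0}$, the last equality holding because $Q_{j0} = 2^{-j}[0,1)^n$. For the smoothness condition \eqref{NSA-cond2}, the normalization is designed precisely so that the rescaled function reproduces $k$:
\[
a_Q(2^{-j}x) \;=\; c\, 2^{-jn}\cdot 2^{jn}\, k(2^j\cdot 2^{-j}x) \;=\; c\, k(x).
\]
Since $k\in \Sn$, the norm $\|k \mid \mathscr{C}^K(\rn)\|$ is finite for every $K>0$, and picking $c\leq \|k\mid \mathscr{C}^K(\rn)\|^{-1}$ secures \eqref{NSA-cond2}. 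Crucially, the resulting $c$ is independent of $j$.

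The main work is the moment condition \eqref{NSA-cond3}. Given $\psi\in\mathscr{C}^L(\rn)$, the substitution $y=2^jx$ yields
\[
\int_{\rn}\psi(x)\, a_Q(x)\,dx \;=\; c\, 2^{-jn}\int_{\rn}\tilde\psi(y)\, k(y)\,dy,\qquad \tilde\psi(y):=\psi(2^{-j}y).
\]
The moment assumption \eqref{loc-means:1} gives $\int y^{\beta}k(y)\,dy = 0$ for all $|\beta|<N$, so any polynomial of degree strictly less than $N$ is killed by integration against $k$. Writing $L = L^- + L^+$ with $L^-\in\no$ and $L^+\in(0,1]$ and fixing a base point $y_0\in 3 Q_{0,0}$, I subtract from $\tilde\psi$ its Taylor polynomial $P_{L^-}$ of degree $L^-$ around $y_0$. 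A standard Taylor--H\"older estimate gives
\[
|\tilde\psi(y) - P_{L^-}(y-y_0)|\;\leq\; C \max_{|\alpha|=L^-}[D^\alpha\tilde\psi]_{L^+}\,|y-y_0|^L.
\]
The chain rule yields $D^\alpha\tilde\psi(y) = 2^{-j|\alpha|}(D^\alpha\psi)(2^{-j}y)$, hence $[D^\alpha\tilde\psi]_{L^+}\leq 2^{-j(|\alpha|+L^+)}[D^\alpha\psi]_{L^+}= 2^{-jL}[D^\alpha\psi]_{L^+}\leq 2^{-jL}\|\psi\mid\mathscr{C}^L(\rn)\|$. Combined with the boundedness of $|y-y_0|$ on $\supp k$, this produces
\[
\Bigl|\int_{\rn}\tilde\psi(y)k(y)\,dy\Bigr|\;=\;\Bigl|\int_{\rn}[\tilde\psi(y)-P_{L^-}(y-y_0)]k(y)\,dy\Bigr|\;\leq\; C'\, 2^{-jL}\,\|\psi\mid\mathscr{C}^L(\rn)\|,
\]
and the prefactor $c\,2^{-jn}$ matches the required $2^{-j(L+n)}\|\psi\mid\mathscr{C}^L(\rn)\|$ decay.

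The delicate point of the argument is the admissible range of $L$. The Taylor step just described requires $L^-<N$, so that the subtracted polynomial is annihilated by integration against $k$; this covers all non-integer $L<N+1$ and the integer case $L=N$ (where $L^-=N-1$) without issue. The true borderline $L=N+1$ forces $L^-=N$, and the degree-$N$ part of $P_N$ is generally \emph{not} killed by $k$; this is the main obstacle. I would handle it by exploiting the Lipschitz regularity ($L^+=1$) to trade one derivative for an extra factor $2^{-j}$, or alternatively by observing that the specific construction $k=\Delta^M k^0$ with $2M\geq N$ yields vanishing moments up to order $2M-1$, which can be larger than $N-1$ and thereby push the permissible $L$ past $N$. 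All other steps are a routine change of variables plus the standard Taylor estimate.
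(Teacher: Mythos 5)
Your verification of the support condition \eqref{NSA-cond1} and of \eqref{NSA-cond2} is fine, and so is the Taylor argument for \eqref{NSA-cond3} — but only as long as the subtracted Taylor polynomial has degree at most $N-1$, i.e.\ ${\lfloor L\rfloor}^-\leq N-1$, which means $L\leq N$. Your claim that the argument covers ``all non-integer $L<N+1$'' is not correct: for non-integer $L\in(N,N+1)$ one has ${\lfloor L\rfloor}^-=N$, so the degree-$N$ part of the Taylor polynomial survives, exactly as in the integer borderline case $L=N+1$. Moreover, this is not a defect of the estimate that can be repaired by ``trading one derivative for a factor $2^{-j}$'': take $\psi\in\Sn$ which coincides with a monomial $x^\gamma$, $|\gamma|=N$, on a neighbourhood of $3Q_{0,0}\supset 2^{-j}\supp k$ for all $j\in\no$. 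Then $\int_{\rn}\psi(x)\,a_{Q_{j0}}(x)\,dx=c\,2^{-j(N+n)}\int_{\rn}y^\gamma k(y)\,dy$, which is incompatible with \eqref{NSA-cond3} for any $L>N$ (with a constant independent of $j$) unless $\int y^\gamma k(y)\,dy=0$. Since the hypotheses of Corollary \ref{cor-localmeans} only guarantee vanishing moments of $k$ for $|\beta|<N$, the range $L\in(N,N+1]$ — which is precisely what distinguishes the lemma from the easy statement $L\leq N$ — cannot be obtained from \eqref{loc-means:1} alone; it is a property of the kernels actually constructed, not of arbitrary kernels satisfying the corollary's conditions.

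Your second suggested fix is the right one, but it has to be carried out and the choice of $M$ matters. For $k=\Delta^M k^0$ one has $\widehat{k}(\xi)=(-|\xi|^2)^M\widehat{k^0}(\xi)$, hence $\int y^\beta k(y)\,dy=0$ for all $|\beta|<2M$; this kills the degree-$N$ terms precisely when $2M\geq N+1$. The construction in the proof of Corollary \ref{cor-localmeans} only requires $2M\geq N$, and with the minimal even choice $2M=N$ the degree-$N$ moments are in general nonzero, since $\int_{\rn}\Delta^M(y^\gamma)\,k^0(y)\,dy=c_\gamma\,\widehat{k^0}(0)\neq0$ for suitable $|\gamma|=2M$; so one must explicitly take $2M\geq N+1$ (which is harmless for the corollary) and then your Taylor argument goes through verbatim for all $L\leq N+1$. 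Note finally that the paper itself gives no proof of this lemma but refers to \cite[Lemmas~3.6, 3.7]{GK16}, so a self-contained direct proof along your lines stands or falls exactly on this borderline point, which your proposal leaves unresolved.
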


\begin{lemma} \label{lemma:estimates}
Let $k_j$ be the local means according to Corollary \ref{cor-localmeans} with $d=3$. Let also $(a_Q)_{Q \in \mathcal{Q^*}}$ be non-smooth $[K,L]$-atoms. Then, with $Q= Q_{\nu,k},  \nu \in \nat_0, k \in \zn$, it holds
  \begin{equation*}
    \left| \int_{\rn} k_j(y) a_Q (x-y) \, dy\right| \leq c\,2^{-(j-\nu)K} \chi(c\, Q)(x), \quad \mbox{ for } j\geq \nu \nonumber
  \end{equation*}
  and
  \begin{equation*}
    \left| \int_{\rn} k_j(x-y) a_{Q}(y) \, dy\right| \leq c\,2^{-(\nu-j)(L+n)} \chi(c\, 2^{\nu-j}Q)(x), \quad \mbox{ for } j< \nu. \nonumber
  \end{equation*}
\end{lemma}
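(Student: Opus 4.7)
The plan is to handle the two cases by a duality of roles: for $j \geq \nu$ the moments of $k_j$ are tested against the smoothness of $a_Q$, while for $j < \nu$ the near-orthogonality condition \eqref{NSA-cond3} of $a_Q$ is tested against the smooth function $k_j$. The size factors $2^{-(j-\nu)K}$ and $2^{-(\nu-j)(L+n)}$ then emerge by pure scaling, and the indicator factors come from the convolution of the supports.

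\textbf{Support analysis.} Both integrands vanish unless $\supp k_j \subset 2^{-j}dQ_{0,0}$ and (a translate of) $\supp a_Q \subset 3Q$ overlap. This forces $x$ into $3Q + 2^{-j}dQ_{0,0}$, which is contained in $cQ$ when $j\ge\nu$ (since $2^{-j}\le 2^{-\nu}$) and in $c\, 2^{\nu-j}Q$ when $j<\nu$, for a constant $c$ depending only on $d$ and $n$. This yields the characteristic function appearing in each of the two estimates.

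\textbf{Case $j\geq\nu$.} The moment assumption \eqref{loc-means:1} on $k$ transfers to $k_j$ by a simple change of variables: $\int y^{\beta} k_j(y)\,dy = 0$ for $|\beta|<N$. I would therefore subtract from $a_Q(x-\cdot)$ its Taylor polynomial $T$ at $0$ of order $\lfloor K\rfloor^-$, which is annihilated against $k_j$ provided $N>\lfloor K\rfloor^-$ (the tacit compatibility assumption between the order of the local means and the smoothness of the atoms). Using \eqref{NSA-cond2} and rescaling $a_Q(\cdot)=\tilde a(2^{\nu}\cdot)$ with $\|\tilde a\mid \mathscr{C}^{K}(\rn)\|\leq 1$, the Hölder form of the Taylor remainder gives $|a_Q(x-y)-T(y)|\lesssim 2^{\nu K}|y|^{K}$. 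Consequently,
$$
\Bigl|\int_{\rn} k_j(y)\,a_Q(x-y)\,dy\Bigr|
\lesssim 2^{\nu K}\int_{\rn}|k_j(y)|\,|y|^{K}\,dy
= 2^{\nu K}\cdot 2^{-jK}\int_{\rn}|k(u)|\,|u|^{K}\,du
\lesssim 2^{-(j-\nu)K},
$$
which, combined with the support analysis, yields the first inequality.

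\textbf{Case $j<\nu$.} Here I apply \eqref{NSA-cond3} with the test function $\psi(\cdot):=k_j(x-\cdot)\in\mathscr{C}^{L}(\rn)$. A direct differentiation of $k_j(z)=2^{jn}k(2^j z)$ together with the $\mathscr{C}^{L}$-estimate on $k$ (which holds since $k\in\Sn$) gives, after keeping track of $\{L\}^+$-Hölder factors on the top derivatives, $\|k_j(x-\cdot)\mid \mathscr{C}^{L}(\rn)\|\lesssim 2^{j(n+L)}$. Plugging this into \eqref{NSA-cond3} produces
$$
\Bigl|\int_{\rn} k_j(x-y)\,a_{Q}(y)\,dy\Bigr|
\lesssim 2^{-\nu(L+n)}\cdot 2^{j(L+n)}
= 2^{-(\nu-j)(L+n)},
$$
which combined with the support factor is the second inequality.

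The only genuinely delicate point is the Taylor-remainder step in the first case when $K$ is non-integer: one must argue via the Hölder regularity of order $\{K\}^+$ of the top derivatives rather than through a clean integer-order Taylor formula. Everything else reduces to routine scaling and the support bookkeeping already carried out.
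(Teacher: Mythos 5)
Your proof is correct. The support bookkeeping is right (the convolution lives in $3Q+2^{-j}dQ_{0,0}$, which lies in $cQ$ when $j\geq\nu$ and in $c\,2^{\nu-j}Q$ when $j<\nu$), and both scaling computations check out: $\|a_Q\mid\mathscr{C}^{K}(\rn)\|\lesssim 2^{\nu K}$ from \eqref{NSA-cond2} and $\|k_j(x-\cdot)\mid\mathscr{C}^{L}(\rn)\|\lesssim 2^{j(n+L)}$ from $k\in\Sn$, which together with \eqref{NSA-cond3} and the rescaled moment cancellation give exactly the stated decay rates. The paper itself does not prove this lemma but cites \cite[Lemmas~3.6,3.7]{GK16}; your argument is the standard one underlying those.

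One remark on structure: the case $j\geq\nu$ can be made slightly slicker by \emph{using} the preceding lemma of the paper, which says that $c\,2^{-jn}k_j$ is itself a non-smooth $[K',L']$-atom (with $K'$ arbitrary and $L'\leq N+1$), centered at $Q_{j0}$. Then one simply feeds $\psi:=a_Q(x-\cdot)\in\mathscr{C}^{K}(\rn)$ into its moment condition \eqref{NSA-cond3} with $L'=K$, exactly mirroring what you do in the case $j<\nu$. Your Taylor-remainder argument (including the careful treatment of the fractional part $\{K\}^+$ in the remainder) is precisely the proof of that moment condition for $k_j$, so the two routes are equivalent; yours is more self-contained, while the other makes the two cases visibly dual. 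You correctly flag that one needs the compatibility $N>\lfloor K\rfloor^-$; this is consistent with the constraint $L'\leq N+1$ in the preceding lemma, and is harmless because the local means in Corollary \ref{cor-localmeans} can be constructed with $N$ as large as desired.
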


We are now ready to state the main theorems of this section. We start by presenting the result for the spaces $\Fphi$. However, we won't present the proof here, as it can be carried out in the same way as the proof of \cite[Theorem~4.10]{GM18}. Due to the use of an admissible weight sequence $\bm{w} \in \mathcal{W}^{\alpha}_{\alpha_1,\alpha_2}(\rn)$ in our case, we only have to do some minor adjustments.
\begin{theorem} \label{Thm-NonSmooth-F}  
	Let $p, q \in \Plog$ with $p^+, q^+<\infty$, $\bm{w}=(w_j)_{j\in\nat_0} \in \mathcal{W}^{\alpha}_{\alpha_1,\alpha_2}(\rn)$ and $\phi\in \mathcal{G}(\real^{n+1}_+)$.
	\begin{list}{}{\labelwidth1.3em\leftmargin2.3em}
		\item[{\upshape (i)\hfill}]  Let $K, L\geq 0$ with
		\begin{equation} 
		K> \alpha_2 + \max\{0,\log_2 \tilde{c}_1(\phi)\}\quad \mbox{ and } \quad L>\frac{n}{\min\{1, p^-,q^-\}} -n - \alpha_1.
		\end{equation}
		Suppose that $\{a_Q\}_{Q\in \mathcal{Q^*}}$ is a family of $[K,L]$-non-smooth atoms and  that $\{t_Q\}_{Q\in \mathcal{Q^*}} \in \fphi$.
		Then 
		$f:= \sum_{Q\in \mathcal{Q^*}} t_Q\,a_Q$
		converges in   $\SSn$ and 
		$$
		\|f\mid\Fphi\| \leq c\, \|t \mid \fphi\|
		$$
		with $c$ being a positive constant independent of $t$.
		\item[{\upshape (ii)\hfill}]  Conversely, if  $f\in \Fphi$, then, for any given $K,L\geq 0$, there exists a sequence   $\{t_Q\}_{Q\in \mathcal{Q^*}} \in \fphi$ and a  sequence  $\{a_Q\}_{Q\in \mathcal{Q^*}}$ of $[K,L]$-non-smooth atoms  such that $f= \sum_{Q\in \mathcal{Q^*}} t_Q\,a_Q$  in   $\SSn$ and 
		\begin{equation}
		\|t \mid \fphi\|\leq c\, \|f\mid \Fphi\|  \nonumber
		\end{equation}
		with $c$ being a positive constant independent of $f$.
	\end{list}
\end{theorem}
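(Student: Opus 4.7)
The plan is to adapt the proof of \cite[Theorem~4.10]{GM18} to the present 2-microlocal setting, replacing the dyadic weight $2^{js(\cdot)}$ throughout by the admissible weight sequence $w_j$. On the image side, the norm $\|f \mid \Fphi\|$ will be measured via the local means characterization of Corollary \ref{cor-localmeans}: I fix an integer $N$ with $\alpha_2 + \max\{0, \log_2 \tilde{c}_1(\phi)\} < N \leq K$ (possible by the assumption on $K$) and compact local means $k_0, k$ satisfying the hypotheses of that corollary.

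Part (ii) is the easier direction. Given $f \in \Fphi$, the smooth atomic decomposition of Wu et al. \cite{WYY18} provides a coefficient sequence $\{t_Q\}_{Q\in\mathcal{Q}^*} \in \fphi$ and smooth $[K', L']$-atoms for arbitrarily large $K', L' \in \no$ of our choosing, together with the desired norm estimate. Choosing $K' \geq \lceil K \rceil$ and $L' \geq \lceil L \rceil + 1$ and invoking Remark \ref{compare-atoms}, every such smooth atom is simultaneously a non-smooth $[K, L]$-atom, up to a harmless normalization constant absorbed into the bound.

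For part (i), write $f = \sum_{\nu \geq 0} \sum_{k \in \zn} t_{Q_{\nu k}} a_{Q_{\nu k}}$ and split $k_j \ast f$ into a high-frequency piece over $\nu \leq j$ and a low-frequency piece over $\nu > j$. Lemma \ref{lemma:estimates} supplies the pointwise estimates
\begin{equation*}
|(k_j \ast a_{Q_{\nu k}})(x)| \lesssim 2^{-(j-\nu)K}\chi_{cQ_{\nu k}}(x)\quad (\nu \leq j), \qquad \lesssim 2^{-(\nu-j)(L+n)}\chi_{c2^{\nu-j}Q_{\nu k}}(x)\quad (\nu > j).
\end{equation*}
The weight $w_j(x)$ is then transferred to $w_\nu(2^{-\nu}k)$ via the two axioms in Definition \ref{def-ad-weight}, producing factors $2^{(j-\nu)\alpha_2}$ and $2^{(\nu-j)(\alpha - \alpha_1)}$ in the two regimes. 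This recasts the problem as a sum of the form treated in Lemma \ref{conv-ineq}, applied to $g_\nu(x) := \sum_k w_\nu(2^{-\nu}k)|t_{Q_{\nu k}}|\chi_{Q_{\nu k}}(x)$, whose $\SeqF$-norm is exactly $\|t \mid \fphi\|$.

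The main obstacle lies in the low-frequency regime, where the enlarged support $c2^{\nu-j}Q_{\nu k}$ has to be brought back to $Q_{\nu k}$. I would pick $r \in (0, \min\{1, p^-, q^-\})$ and use $\chi_{c2^{\nu-j}Q_{\nu k}}(x) \lesssim 2^{(\nu-j)n/r}\,[\mathcal{M}(\chi_{Q_{\nu k}})(x)]^{1/r}$, then pass to the $r$-scaled mixed-norm space via Remark \ref{rmk:mixed}(iii) and apply the $\eta$-function convolution estimate of Lemma \ref{eta-phi}(ii) on that rescaled space in place of a Fefferman--Stein inequality. The admissibility of such an $r$ is precisely what forces $L > n/\min\{1, p^-, q^-\} - n - \alpha_1$: the net exponent $L + n + \alpha_1 - n/r$ governing the low-frequency decay must remain strictly positive, the slack $\max\{0, \log_2 \tilde{c}_1(\phi)\}$ being absorbed inside Lemma \ref{conv-ineq}. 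Convergence of the series $\sum_Q t_Q a_Q$ in $\SSn$ is verified separately by pairing with an arbitrary Schwartz function $\psi$: the moment condition \eqref{NSA-cond3} handles the high-frequency tail $\nu \to \infty$ and the size bound \eqref{NSA-cond2} handles the finitely many low-frequency generations, producing an absolutely convergent majorant.
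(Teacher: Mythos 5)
Your proposal follows essentially the route the paper indicates for this theorem -- namely the scheme of \cite[Theorem~4.10]{GM18}, which is also the structure of the Besov-case proof given in full (Theorem \ref{Thm-NonSmooth-B}): Step (ii) via the smooth atomic decomposition of \cite{WYY18} plus Remark \ref{compare-atoms}; Step (i) via local means, Lemma \ref{lemma:estimates}, the weight transfer using Definition \ref{def-ad-weight}, and the combination of the discrete inequality (Lemma \ref{conv-ineq}) with the $\eta$-function estimate (Lemma \ref{eta-phi}(ii)). The identification of $g_\nu := \sum_k w_\nu(2^{-\nu}k)|t_{Q_{\nu k}}|\chi_{Q_{\nu k}}$ and the role of the threshold $L > n/\min\{1,p^-,q^-\} - n - \alpha_1$ are correctly diagnosed, as is the convergence argument in $\SSn$.

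Two small imprecisions are worth flagging. First, the choice of the moment order $N$: you require an integer $N$ with $\alpha_2 + \max\{0,\log_2\tilde{c}_1(\phi)\} < N \leq K$ and claim this is "possible by the assumption on $K$". That is not true in general -- if $K$ lies within distance $1$ of the lower threshold there may be no integer in that interval -- and, more importantly, the inequality is pointing the wrong way. The decay $2^{-(j-\nu)K}$ in Lemma \ref{lemma:estimates} (regime $j\geq\nu$) uses the smoothness of the atom together with the vanishing moments of the local mean, so one needs $N$ to be at least on the order of $K$ rather than at most $K$; the correct move is simply to take $N$ sufficiently large, which Corollary \ref{cor-localmeans} permits since $N > \alpha_2 + \max\{0,\log_2\tilde{c}_1(\phi)\}$ is the only lower constraint. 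Second, the pointwise step $\chi_{c2^{\nu-j}Q_{\nu k}}(x) \lesssim 2^{(\nu-j)n/r}[\mathcal{M}\chi_{Q_{\nu k}}(x)]^{1/r}$ invokes the Hardy--Littlewood maximal operator, whose vector-valued boundedness fails for variable $q$; the paper's argument never passes through $\mathcal{M}$ but instead goes directly to a pointwise majorant by $\eta_{j,ar}\ast(\cdot)^r$ (summed over annuli indexed by $i$). You do note that Lemma \ref{eta-phi}(ii) replaces the Fefferman--Stein inequality, so the intent is right, but the intermediate $\mathcal{M}$ estimate should be discarded in favour of the $\eta$-convolution estimate directly. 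Neither issue touches the substance of the argument.
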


\begin{theorem} \label{Thm-NonSmooth-B}  
Let $p, q \in \Plog$, $\bm{w}=(w_j)_{j\in\nat_0} \in \mathcal{W}^{\alpha}_{\alpha_1,\alpha_2}(\rn)$ and $\phi\in \mathcal{G}(\real^{n+1}_+)$.
\begin{list}{}{\labelwidth1.3em\leftmargin2.3em}
 \item[{\upshape (i)\hfill}]  Let $K, L\geq 0$ with
\begin{equation} \label{cond:K,L}
    K> \alpha_2 + \max\{0,\log_2 \tilde{c}_1(\phi)\}\quad \mbox{ and } \quad L>\frac{n}{\min\{1, p^-\}} -n - \alpha_1.
\end{equation}
Suppose that $\{a_Q\}_{Q\in \mathcal{Q^*}}$ is a family of $[K,L]$-non-smooth atoms and  that $\{t_Q\}_{Q\in \mathcal{Q^*}} \in \bphi$.
Then 
	$f:= \sum_{Q\in \mathcal{Q^*}} t_Q\,a_Q$
converges in   $\SSn$ and 
$$
\|f\mid\Bphi\| \leq c\, \|t \mid \bphi\|
$$
with $c$ being a positive constant independent of $t$.
 \item[{\upshape (ii)\hfill}]  Conversely, if  $f\in \Bphi$, then, for any given $K,L\geq 0$, there exists a sequence   $\{t_Q\}_{Q\in \mathcal{Q^*}} \in \bphi$ and a  sequence  $\{a_Q\}_{Q\in \mathcal{Q^*}}$ of $[K,L]$-non-smooth atoms  such that $f= \sum_{Q\in \mathcal{Q^*}} t_Q\,a_Q$  in   $\SSn$ and 
\begin{equation}
\|t \mid \bphi\|\leq c\, \|f\mid \Bphi\|  \nonumber
\end{equation}
with $c$ being a positive constant independent of $f$.
\end{list}
\end{theorem}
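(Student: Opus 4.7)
The plan is to adapt the proof of the $F$-case Theorem~\ref{Thm-NonSmooth-F} (which itself follows \cite[Theorem~4.10]{GM18}) to the Besov scale, the key adjustment being that $L_{\p}(\ell_{\q})$-type convolution bounds are replaced throughout by their $\ell_{\q}(L_{\p})$ counterparts from Lemma~\ref{eta-phi}(i) and Lemma~\ref{conv-ineq}. For part (ii) I will invoke the smooth atomic decomposition already established in \cite{WYY18} and combine it with Remark~\ref{compare-atoms}: any smooth $[K',L']$-atom is, up to a harmless constant, a $[K,L]$-non-smooth atom whenever $K'\geq K$ and $L'\geq L$, so this immediately yields both the decomposition and the coefficient estimate.

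For part (i), I would first reduce, via the local means characterization of Corollary~\ref{cor-localmeans} applied with $d=3$ and moment order $N$ large enough, to estimating $\|(w_j (k_j * f))_{j\in\nat_0} \mid \SeqB\|$ for $f=\sum_{Q\in\mathcal{Q}^*} t_Q a_Q$. Substituting the decomposition and applying Lemma~\ref{lemma:estimates} to each convolution $k_j * a_Q$ in the two regimes $j\geq\nu$ and $j<\nu$ produces the pointwise bounds $2^{-(j-\nu)K}\chi_{cQ}$ and $2^{-(\nu-j)(L+n)}\chi_{c\,2^{\nu-j}Q}$. Combining these with the admissible weight inequalities
\[
w_j(x) \leq 2^{\alpha_2(j-\nu)}\,w_\nu(x) \quad (j\geq\nu), \qquad w_j(x) \leq 2^{-\alpha_1(\nu-j)}\,w_\nu(x) \quad (j<\nu),
\]
with the pointwise control $w_\nu(x) \lesssim w_\nu(2^{-\nu}m)(1+2^\nu|x-2^{-\nu}m|)^\alpha$ on the enlarged cubes, and with an $r$-trick using $r\in(0,\min\{1,p^-\})$ to dominate each $\chi_{c\,2^{\nu-j}Q}$ by an $\eta_{\nu,R}$-convolution of $\sum_{m\in\zn}|t_{Q_{\nu m}}|^r\chi_{Q_{\nu m}}$, I would arrive at an estimate of the form
\[
w_j(x)\,|(k_j * f)(x)| \lesssim \sum_{\nu=0}^{j} 2^{-(j-\nu)(K-\alpha_2)}\, g_\nu(x) + \sum_{\nu=j+1}^{\infty} 2^{-(\nu-j)D_1}\, g_\nu(x),
\]
with $D_1:=L+n+\alpha_1-n/\min\{1,p^-\}>0$ by \eqref{cond:K,L} and with $g_\nu$ an $\eta_{\nu,R}$-convolution of the natural sequence appearing in $\|t\mid\bphi\|$. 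Since the hypothesis $K>\alpha_2+\max\{0,\log_2\tilde{c}_1(\phi)\}$ makes $K-\alpha_2$ admissible for Lemma~\ref{conv-ineq}, the successive application of Lemma~\ref{conv-ineq} and Lemma~\ref{eta-phi}(i) yields $\|(w_j(k_j*f))_j\mid\SeqB\|\lesssim\|t\mid\bphi\|$. Convergence of the series in $\SSn$ follows from the same estimate applied to finite truncations of the coefficient sequence, since the partial sums then form a Cauchy sequence in $\Bphi\hookrightarrow\SSn$.

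The most delicate step will be the $j<\nu$ regime: the enlarged cubes $c\,2^{\nu-j}Q_{\nu m}$ may protrude well beyond any fixed dyadic cube $P$ appearing in the supremum defining $\|\cdot\mid\SeqB\|$, and the admissible weight parameter $\alpha_1$ (which may be negative) must be tracked carefully. Because the vector-valued maximal inequality is not available in $\ell_{\q}(L_{\p})$, the only route I see to recover the natural sequence expression from the pointwise bound involving $\chi_{c\,2^{\nu-j}Q}$ is through the $r$-trick followed by Lemma~\ref{eta-phi}(i); this is precisely where the sharp threshold $L>n/\min\{1,p^-\}-n-\alpha_1$ in \eqref{cond:K,L} is consumed. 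The order of reductions (local means first, then pointwise domination, then Lemma~\ref{eta-phi}(i), and finally Lemma~\ref{conv-ineq}) is dictated by this constraint and is the main technical point in which the Besov argument differs from its Triebel--Lizorkin counterpart.
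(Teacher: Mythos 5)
Your strategy for both parts tracks the paper's: part~(ii) is exactly its Step~1 (smooth atomic decomposition from \cite{WYY18} together with Remark~\ref{compare-atoms}), and for the norm estimate in part~(i) you correctly identify the local means characterization (Corollary~\ref{cor-localmeans}), Lemma~\ref{lemma:estimates}, the admissible weight inequalities, the $r$-trick, and Lemmas~\ref{conv-ineq} and~\ref{eta-phi}(i) as the key ingredients. Using those two lemmas uniformly across both regimes is a legitimate (and arguably cleaner) alternative to what the paper actually does in Substep~3.1: there the sum over $\nu\leq j$ --- in particular the piece with $\nu < j_P$ --- is estimated directly through $\ell_1(L_{\frac{\p}{r}})\hookrightarrow \ell_{\frac{\q}{r}}(L_{\frac{\p}{r}})$, Remark~\ref{rmk:conv-Lp} and the doubling condition \eqref{phi-cond1}, rather than by invoking the convolution lemmas. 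One detail in your closing paragraph is off: the order of the two lemmas is not fixed. For $\nu\leq j$ the supports produced by Lemma~\ref{lemma:estimates} sit at scale $\nu$, so Lemma~\ref{conv-ineq} is applied first and Lemma~\ref{eta-phi}(i) afterwards, to the resulting $\eta_{\nu,\cdot}$-convolutions; for $\nu>j$ the supports sit at scale $j$, so one must first commute the sum in $\nu$ with the $\eta_{j,\cdot}$-convolution and apply Lemma~\ref{eta-phi}(i) before Lemma~\ref{conv-ineq}. Your text fixes a single order, which cannot be correct for both cases.

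The genuine gap is your convergence argument. Deducing that the partial sums of $\sum_{Q} t_Q a_Q$ are Cauchy in $\Bphi$ from the norm estimate applied to finite truncations requires the tail norms $\|t\,\chi_{\mathrm{tail}}\mid \bphi\|$ to vanish, and this fails in general: the quasi-norm of $\bphi$ involves a supremum over all dyadic cubes and the theorem allows $q^+=\infty$, so the quasi-norm has no absolute continuity property. The paper's Step~2 avoids this entirely by a direct pairing estimate. It sets $\tilde{p}(x):=p(x)/r$ and $\tilde{w}_j(x):=w_j(x)\,2^{\frac{n}{p(x)}(r-1)}$, chosen so that $\tilde{\alpha}_1:=\alpha_1+\frac{n}{p^-}(r-1)>-L$, invokes the embeddings $\bphi\hookrightarrow b^{\bm{w},\phi}_{\p,\infty}(\rn)\hookrightarrow b^{\tilde{\bm{w}},\phi}_{\tilde{p}(\cdot),\infty}(\rn)$ from \cite{WYY18}, and shows $\bigl|\bigl\langle \sum_{|k|\leq\Lambda} t_{Q_{\nu k}} a_{Q_{\nu k}},\,h\bigr\rangle\bigr| \lesssim 2^{-\nu(L+\tilde{\alpha}_1)}\,\|t\mid b^{\tilde{\bm{w}},\phi}_{\tilde{p}(\cdot),\infty}(\rn)\|$ for $h\in\Sn$; summing the geometric series in $\nu$ gives convergence in $\SSn$ with no continuity of the sequence quasi-norm required. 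You should replace your one-line Cauchy argument by something of that kind.
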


\begin{proof}
{\it Step 1.} We start by proving (ii) for what we fix $K,L\geq 0$  and assume that $f\in \Bphi$ is given. 
Then, by \cite[Theorem~5.3]{WYY18}, we know that $f$ can be written as an atomic decomposition with $[K_1,L_1]$-smooth atoms with  
$K_1,L_1\in\no$ chosen so that $K_1\geq K$ and $L_1\geq L$.  Since those atoms are $[K,L]$-non-smooth atoms, cf. Remark \ref{compare-atoms},  part (ii) is proved.

\smallskip

{\it Step 2.}  In this  step we show  that $f= \sum_{Q\in \mathcal{Q^*}} t_Q\,a_Q$ converges in $\mathscr{S}'(\rn)$ if  
 $\{t_Q\}_{Q\in \mathcal{Q^*}} \in \bphi$ and $\{a_Q\}_{Q\in \mathcal{Q^*}}$ is a family of  $[K,L]$-non-smooth atoms  with $K,L \geq 0$ such that \eqref{cond:K,L} holds. To this end, it suffices to show that
 \begin{equation} \label{conv:aim}
    \lim_{N\rightarrow \infty, \Lambda \rightarrow \infty} \sum_{\nu=0}^{N} \sum_{k \in \zn, |k|\leq \Lambda} t_{Q_{\nu k}} a_{Q_{\nu k}}
 \end{equation}
 exists in $\mathscr{S}'(\rn)$, and we rely mainly on the proof of \cite[Theorem~4.10]{GM18}, where the corresponding result was proved for the spaces $F_{\p,\q}^{\s, \phi}(\rn)$. By \eqref{cond:K,L}, we know that there exists $r \in (0, \min\{1, p^-, q^-\})$ such that $\alpha_1 + \frac{n}{p^-}(r-1)>-L$. Let 
 $$
 \tilde{p}(x):= \frac{p(x)}{r}\quad \mbox{and}\quad \tilde{w}_j(x):= w_j(x)\, 2^{\frac{n}{p(x)}(r-1)}, \quad \mbox{for all } x \in \rn, j \in \nat_0.
 $$ 
 Then $\tilde{\bm{w}}=(\tilde{w}_j)_{j \in \nat_0}\in \mathcal{W}^{\tilde{\alpha}}_{\tilde{\alpha}_1,\tilde{\alpha}_2}(\rn)$ and
 $$
 \tilde{\alpha}_1:= \alpha_1 + \frac{n}{p^-}(r-1)>-L.
 $$
 According to \cite[Proposition~4.1(i), Remark~4.2(ii)]{WYY18}, we have the following sequence of embeddings
 $$
\bphi \hookrightarrow b_{\p,\infty}^{\bm{w}, \phi}(\rn) \hookrightarrow b_{\tilde{p}(\cdot),\infty}^{\tilde{\bm{w}}, \phi}(\rn).
 $$
 By similar arguments as used in the proof of \cite[Theorem~4.10]{GM18}, we conclude that there exist $\delta_0 > \max\{0, \log_2\tilde{c}_1(\phi)\} $, $a > n $ and $R>0$ as big as we want such that, for all $h \in \Sn$ and $j \in \nat_0$, 

\begin{align}
 \Big| \int_{\rn} &\sum_{k \in \zn, |k|\leq \Lambda} t_{Q_{\nu k}} a_{Q_{\nu k}}(x) h(x) \, dx \Big| \lesssim 2^{-\nu (L+ \tilde{\alpha}_1)} \sum_{j=0}^{\infty} 2^{-j \delta_0}\sum_{i=0}^{\infty} 2^{-i (R-a)} \nonumber \\
 & \qquad \cdot \bigg\| \sum_{k \in \zn}\tilde{w}_{\nu}(2^{-\nu}k) |t_{Q_{\nu k}}| \chi_{Q_{\nu k}} \Big|\, L_{\tilde{p}(\cdot)}(Q(0, 2^{i+j + c_0}))\bigg\| \nonumber\\
 & \lesssim 2^{-\nu (L+ \tilde{\alpha}_1)} \sum_{j=0}^{\infty} 2^{-j \delta_0}\sum_{i=0}^{\infty} 2^{-i (R+\tilde{\alpha}-a)} \phi(Q(0, 2^{i+j + c_0})) \big\|t \mid b_{\tilde{p}(\cdot),\infty}^{\tilde{\bm{w}}, \phi}(\rn) \big\| \nonumber\\
 & \lesssim 2^{-\nu (L+ \tilde{\alpha}_1)} \sum_{j=0}^{\infty} 2^{-j (\delta_0 - \log_2\tilde{c}_1(\phi))}\sum_{i=0}^{\infty} 2^{-i (R + \tilde{\alpha}-a- \log_2 \tilde{c}_1(\phi))} \big\|t \mid b_{\tilde{p}(\cdot),\infty}^{\tilde{\bm{w}}, \phi}(\rn) \big\| \nonumber\\
 & \lesssim  2^{-\nu (L+ \tilde{\alpha}_1)} \big\|t \mid b_{\tilde{p}(\cdot),\infty}^{\tilde{\bm{w}}, \phi}(\rn) \big\|.
\end{align} 
Here we have used the following estimate, coming from the  properties of the class $\mathcal{W}^{\tilde{\alpha}}_{\tilde{\alpha}_1,\tilde{\alpha}_2}(\rn)$,
$$
2^{-\nu \tilde{\alpha}_1} \tilde{w}_{\nu}(2^{-\nu}m) (1+ 2^\nu |x-2^{-\nu}m|)^{\tilde{\alpha}} (1 + |x|)^{\tilde{\alpha}} \gtrsim 2^{-\nu \tilde{\alpha}_1}\tilde{w}_{\nu}(x)  (1 + |x|)^{\tilde{\alpha}}  \gtrsim 1,
$$
where the implicit constants are independent of $x \in \rn$, $\nu \in \nat_0$ and $m \in \zn$. Therefore, since $L > -\tilde{\alpha}_1$, we conclude that the limit of \eqref{conv:aim} exists in $\SSn$.

\smallskip

{\it Step 3.} We deal now with part (i). 

Firstly, as in the proof of the characterization of $F^{\s, \phi}_{\p, \q}(\rn)$ with non-smooth atoms stated in \cite[Theorem~4.10]{GM18}, we will apply the local means characterization proved in the previous section. Let us assume that  $\{t_Q\}_{Q\in \mathcal{Q^*}} \in \bphi$ and that $\{a_Q\}_{Q\in \mathcal{Q^*}}$ is a family of  $[K,L]$-non-smooth atoms  with $K,L \geq 0$ such that \eqref{cond:K,L} holds. Since the convergence of $f= \sum_{Q\in \mathcal{Q^*}} t_Q\,a_Q$ in $\mathscr{S}'(\rn)$ was already shown in the previous step, we are left to the proof of
$$
\|f\mid\Bphi\| \lesssim \|t \mid \bphi\|.
$$
Without loss of generality, we may assume that $\|t \mid \bphi\|=1$ and show that $\|f\mid \Bphi\| \lesssim 1$. \\


Let  $k_j$, $j\in\no$, be local means  as in Corollary \ref{cor-localmeans}. Then, 
$$
  k_j \ast f  =  \sum_{\nu=0}^{j} \sum_{k \in \zn} t_{Q_{\nu k}} \, k_j \ast a_{Q_{\nu k} }
  + \sum_{\nu=j+1}^{\infty} \sum_{k \in \zn} t_{Q_{\nu k}} \, k_j \ast a_{Q_{\nu k} }
$$

and hence
\begin{align}
\left\| f \mid \Bphi \right\| &\lesssim  \left\| \left\{ \sum_{\nu=0}^{j} \sum_{k \in \zn}w_j \, |t_{Q_{\nu k}}| \, |k_j \ast a_{Q_{\nu k}}| \right\}_{j \in \nat_0} \mid \ell_{\q}^\phi(L_{\p})\right\|  \nonumber \\
& \qquad  +  \left\| \left\{ \sum_{\nu=j+1}^{\infty} \sum_{k \in \zn} w_j \,|t_{Q_{\nu k}}| \, |k_j \ast a_{Q_{\nu k}}| \right\}_{j \in \nat_0} \mid \ell_{\q}^\phi(L_{\p}) \right\|  \nonumber \\
& =: I + II. \label{estimate-0}
\end{align}
In what follows, let $r \in (0, \min\{1, p^-, q^-\})$ such that $L > \frac{n}{r} - n - \alpha_1$. 

\medskip
{\it Substep 3.1.}
Firstly we show that $I \lesssim 1$. For any given $P \in \mathcal{Q}$, we split the sum in $\nu$ in two parts as follows
\begin{align*}
\frac{1}{\phi(P)} &\left\| \left\{ \sum_{\nu=0}^{j} \sum_{k \in \zn}w_j \, |t_{Q_{\nu k}}| \, |k_j \ast a_{Q_{\nu k}}| \right\}_{j \geq (j_P \vee 0)} \mid \ell_{\q}(L_{\p} (P))\right\|\\
& \lesssim \frac{1}{\phi(P)}  \left\| \left\{ \sum_{\nu=0}^{(j_P\vee 0)-1} \sum_{k \in \zn}w_j^r \, |t_{Q_{\nu k}}|^r \, |k_j \ast a_{Q_{\nu k}}|^r \right\}_{j \geq (j_P \vee 0)} \mid \ell_{\frac{\q}{r}}(L_{\frac{\p}{r}})\right\|^\frac{1}{r}\\
& \quad + \frac{1}{\phi(P)}  \left\| \left\{ \sum_{\nu=(j_P\vee 0)}^{j} \sum_{k \in \zn}w_j^r \, |t_{Q_{\nu k}}|^r \, |k_j \ast a_{Q_{\nu k}}|^r \right\}_{j \geq (j_P \vee 0)} \mid \ell_{\frac{\q}{r}}(L_{\frac{\p}{r}})\right\|^\frac{1}{3} \\
&=: I_1 + I_2,
\end{align*}
and prove that $I_i\lesssim 1$, for $i=1,2$. Observe that $I_1=0$ if $j_P\leq 0$. Therefore, it suffices to consider the case when $j_P>0$. 
 
Moreover, note that, by the properties of the admissible weight sequence, the following holds true
\begin{equation} \label{est-w1}
w_j(x) \lesssim 2^{(j-\nu)\alpha_2}w_\nu(2^{-\nu}k)(1+2^\nu|x-2^{-\nu}k|)^\alpha,
\end{equation} 
for all $x \in \rn$ and $j\geq \nu$. 

\medskip
We first estimate $I_1$. By \eqref{est-w1} and Lemma~\ref{lemma:estimates}, we can follow the same procedure as in Step 2 of the proof of \cite[Theorem~3.8]{YZY15-F} and obtain
\begin{align*}
\sum_{k \in \zn} & w_j(x)^r \, |t_{Q_{\nu k}}|^r \, |(k_j \ast a_{Q_{\nu k}})(x)|^r\\
&\lesssim 2^{-(j-\nu)(K-\alpha_2) r}\sum_{k \in \zn}  w_\nu(2^{-\nu} k)^r\, |t_{Q_{\nu k}}|^r \, (1+2^{\nu}|x - 2^{\nu}k|)^{(\alpha-M) r}\\
& \lesssim 2^{-(j-\nu)(K-\alpha_2) r} \sum_{i=0}^{\infty} 2^{i(\alpha-M+a)r} \eta_{\nu, ar} \ast \left( \left[ \sum_{k \in \zn} w_\nu(2^{-\nu} k) \, |t_{Q_{\nu k}}| \, \chi_{Q_{\nu k}} \, \chi_{Q(c_P, 2^{i-\nu+c_0})} \right]^r\right)(x),
\end{align*}
where $a>n/r$, $c_P$ is the center of $P$ and $c_0 \in \nat$ independent of $x, P, i, \nu$ and $k$. 

Then, by the fact that $\ell_1(L_{\frac{\p}{r}}) \hookrightarrow \ell_{\frac{\q}{r}}(L_{\frac{\p}{r}})$ and by Remark \ref{rmk:conv-Lp}, we can estimate
\begin{align*}
I_1 & \lesssim \frac{1}{\phi(P)}  \left\| \left\{ \sum_{\nu=0}^{j_P-1} \sum_{k \in \zn}w_j^r \, |t_{Q_{\nu k}}|^r \, |k_j \ast a_{Q_{\nu k}}|^r \right\}_{j \geq j_P} \mid \ell_{1}(L_{\frac{\p}{r}})\right\|^\frac{1}{r}\\
&= \frac{1}{\phi(P)}  \Bigg\{ \sum_{j=j_P}^\infty \left\| \sum_{\nu=0}^{j_P-1} \sum_{k \in \zn}w_j^r \, |t_{Q_{\nu k}}|^r \, |k_j \ast a_{Q_{\nu k}}|^r  \mid L_{\frac{\p}{r}}(\rn)\right\|\Bigg\}^\frac{1}{r}\\
&\lesssim   \Bigg\{ \sum_{j=j_P}^\infty \frac{1}{\phi(P)^r} \bigg\| \sum_{\nu=0}^{j_P-1}2^{-(j-\nu)(K-\alpha_2) r} \sum_{i=0}^{\infty} 2^{i(\alpha-M+a)r} \\
&\qquad\qquad \times \eta_{\nu, ar} \ast \left( \left[ \sum_{k \in \zn} w_\nu(2^{-\nu} k) \, |t_{Q_{\nu k}}| \, \chi_{Q_{\nu k}} \, \chi_{Q(c_P, 2^{i-\nu+c_0})} \right]^r\right)\mid L_{\frac{\p}{r}}(\rn) \bigg\|^r\Bigg\}^{1/r}\\
& \lesssim \Bigg\{\sum_{j=j_P}^\infty \frac{1}{[\phi(P)]^r} \sum_{\nu=0}^{j_P-1}2^{-(j-\nu)(K-\alpha_2) r} \sum_{i=0}^{\infty} 2^{i(\alpha-M+a)r} \\
&\qquad\qquad \times \left\| \sum_{k \in \zn} w_\nu(2^{-\nu} k) \, |t_{Q_{\nu k}}| \, \chi_{Q_{\nu k}} \mid L_{\p}(Q(c_P, 2^{i-\nu+c_0})) \right\|^r\Bigg\}^{1/r}\\
& \lesssim \Bigg\{ \sum_{j=j_P}^\infty \sum_{\nu=0}^{j_P-1}2^{-(j-\nu)(K-\alpha_2) r} \sum_{i=0}^{\infty} 2^{i(\alpha-M+a)r}  \frac{\phi(Q(c_P, 2^{i-\nu+c_0}))^r}{[\phi(P)]^r}\Bigg\}^{1/r}\\
& \lesssim \Bigg\{ \sum_{j=j_P}^\infty \sum_{\nu=0}^{j_P-1}2^{-(j-\nu)(K-\alpha_2) r} \sum_{i=0}^{\infty} 2^{i(\alpha-M+a)r} 2^{(i-\nu+j_P)r \log_2 \tilde{c}_1(\phi)}  \Bigg\}^{1/r},
\end{align*}
where we have used \eqref{phi-cond1} in the last step. Now, noting that $K>\alpha_2 + \max\{0,\log_2 \tilde{c}_1(\phi) \}$, we conclude that
\begin{equation}
I_1 \lesssim \Bigg\{\sum_{j=j_P}^{\infty} 2^{j_P \log_2 \tilde{c}_1(\phi)}  2^{-j(K-\alpha_2) r} \sum_{\nu=0}^{j_P-1}2^{\nu(K-\alpha_2 -\log_2 \tilde{c}_1(\phi)) r}\Bigg\}^{1/r} \lesssim 1, \nonumber
\end{equation}
where $M$ is chosen large enough such that $M>\alpha+a + \log_2 \tilde{c}_1(\phi)$. \\ 

\medskip

Let us prove now that $I_2 \lesssim 1$. Here we follow Step 3 of the proof of \cite[Theorem~4.10]{GM18} and use the estimate \eqref{est-w1} to obtain that, for fixed $x \in P$, $j \geq (j_P \vee 0)$ and $\nu$ with $(j_P \vee 0) \leq \nu \leq j$,
\begin{align*}
&\sum_{k \in \zn}  w_j(x)^r \, |t_{Q_{\nu k}}|^r \, |(k_j \ast a_{Q_{\nu k}})(x)|^r\\
 &\lesssim \sum_{k \in \zn}w_j(x)^r \, |t_{Q_{\nu k}}|^r \,2^{-(j -\nu)(K-\alpha_2)r} (1 + 2^{\nu}|x - x_{Q_{\nu k}}|)^{(\alpha-M)r}\\
 & \lesssim 2^{-(j-\nu)(K-\alpha_2)r} \,\sum_{i=0}^{\infty} 2^{(j+i)(\alpha-M+a)r} \eta_{\nu, ar} \ast \biggl( \Bigl[\sum_{k \in \zn } |t_{Q_{\nu k}}|w_\nu(2^{-\nu}k) \chi_{Q_{\nu k}} \chi_{Q(c_P, 2^{i+j-j_P+c_0})}  \Bigr]^r\biggr)(x),
\end{align*}
where $a> n/r$, $c_P$ is the center of $P$, $c_0 \in \nat$ is a positive constant independent of $x, P, i, \nu, k,j$. Similarly as above, we have
\begin{align*}
I_2 &= \frac{1}{\phi(P)}  \Bigg\{ \sum_{j=(j_P \vee 0)}^\infty \left\| \sum_{\nu=(j_P\vee 0)}^{j} \sum_{k \in \zn}w_j^r \, |t_{Q_{\nu k}}|^r \, |k_j \ast a_{Q_{\nu k}}|^r  \mid L_{\frac{\p}{r}}(\rn)\right\|\Bigg\}^\frac{1}{r}\\
&\lesssim   \Bigg\{ \sum_{j=(j_P \vee 0)}^\infty \frac{1}{\phi(P)^r} \bigg\| \sum_{\nu=(j_P\vee 0)}^{j}2^{-(j-\nu)(K-\alpha_2) r} \sum_{i=0}^{\infty} 2^{(j+i)(\alpha-M+a)r} \\
&\qquad\qquad \times \eta_{\nu, ar} \ast \left( \left[ \sum_{k \in \zn} w_\nu(2^{-\nu} k) \, |t_{Q_{\nu k}}| \, \chi_{Q_{\nu k}} \, \chi_{Q(c_P, 2^{i+j-j_P+c_0})} \right]^r\right)\mid L_{\frac{\p}{r}}(\rn) \bigg\|^r\Bigg\}^{1/r}\\
& \lesssim \Bigg\{\sum_{j=(j_P \vee 0)}^\infty \frac{1}{[\phi(P)]^r} \sum_{\nu=(j_P\vee 0)}^{j}2^{-(j-\nu)(K-\alpha_2) r} \sum_{i=0}^{\infty} 2^{(i+j)(\alpha-M+a)r} \\
&\qquad\qquad \times \left\| \sum_{k \in \zn} w_\nu(2^{-\nu} k) \, |t_{Q_{\nu k}}| \, \chi_{Q_{\nu k}} \mid L_{\p}(Q(c_P, 2^{i+j-j_P+c_0})) \right\|^r\Bigg\}^{1/r}\\
& \lesssim \Bigg\{ \sum_{j=(j_P \vee 0)}^\infty \sum_{\nu=(j_P\vee 0)}^{j} 2^{-(j-\nu)(K-\alpha_2) r} \sum_{i=0}^{\infty} 2^{(i+j)(\alpha-M+a)r}  \frac{\phi(Q(c_P, 2^{i+j-j_P+c_0}))^r}{[\phi(P)]^r}\Bigg\}^{1/r}\\
& \lesssim \Bigg\{ \sum_{j=(j_P \vee 0)}^\infty \sum_{\nu=(j_P\vee 0)}^{j} 2^{-(j-\nu)(K-\alpha_2) r} \sum_{i=0}^{\infty} 2^{(i+j)(\alpha-M+a)r} 2^{(i+j)r \log_2 \tilde{c}_1(\phi)}  \Bigg\}^{1/r},
\end{align*}

Choosing $M$ large enough such that $\displaystyle M> \alpha+a+\log_2 \tilde{c}_1(\phi)$ and since $K>\alpha_2 + \max\{0,\log_2 \tilde{c}_1(\phi) \}$, we conclude that 
$$
I_2 \lesssim \bigg\{ \sum_{j=(j_P \vee 0)}^{\infty} 2^{-j(K-\alpha_2 -\log_2 \tilde{c}_1(\phi)-\alpha+ M-a)r} \sum_{\nu=(j_P \vee 0)}^{j} 2^{\nu(K-\alpha_2)r} \bigg\}^{1/r} \lesssim 1,
$$
as we wanted to demonstrate.\\

\medskip

{\it Substep 3.2.} Lastly, we will show $II \lesssim 1$. For $\nu > j$, by the properties of the admissible weight sequence we have, for all $x \in \rn$,
\begin{equation}\label{est-w2}
w_j(x) \lesssim 2^{-(\nu-j)\alpha_1}w_\nu(2^{-\nu}k)(1+2^j|x-2^{-\nu}k|)^\alpha.
\end{equation}
Together with Lemma \ref{lemma:estimates}, this leads us to
\begin{align*}
&\sum_{k \in \zn}  w_j(x)^r \, |t_{Q_{\nu k}}|^r \, |(k_j \ast a_{Q_{\nu k}})(x)|^r\\
&\lesssim 2^{-(\nu-j)(L+n+\alpha_1-\frac{n}{r}) r}\sum_{k \in \zn}  w_\nu(2^{-\nu} k)^r\, |t_{Q_{\nu k}}|^r \, (1+2^{j}|x - 2^{\nu}k|)^{(\alpha-M) r}\\
& \lesssim 2^{-(\nu-j)(L+n+\alpha_1) r} \sum_{i=0}^{\infty} 2^{i(\alpha-M+a)r} \eta_{j, ar} \ast \left( \left[ \sum_{k \in \zn} w_\nu(2^{-\nu} k) \, |t_{Q_{\nu k}}| \, \chi_{Q_{\nu k}} \, \chi_{Q(c_P, 2^{i-j_P+c_0})} \right]^r\right)(x),
\end{align*}
where $a>n/r$, $c_P$ is the center of $P$ and $c_0 \in \nat$ independent of $x, P, i, \nu$ and $k$. We go back to $II$ and, by Lemmas \ref{conv-ineq} and \ref{eta-phi} together with our assumptions, we conclude
\begin{align*}
II & = \bigg\|\sum_{\nu=j+1}^\infty \sum_{k \in \zn}  w_j(x)^r \, |t_{Q_{\nu k}}|^r \, |(k_j \ast a_{Q_{\nu k}})(x)|^r \mid \ell_{\frac{\q}{r}}^\phi(L_{\frac{\p}{r}}) \bigg\|^\frac1r\\
& \lesssim \bigg\{ \sum_{i=0}^{\infty} 2^{i(\alpha-M+a)r}\bigg\|\sum_{\nu=j+1}^\infty 2^{-(\nu-j)(L+n+\alpha_1-\frac{n}{r}) r}\\
& \qquad \qquad \times \eta_{j, ar} \ast \left( \left[ \sum_{k \in \zn} w_\nu(2^{-\nu} k) \, |t_{Q_{\nu k}}| \, \chi_{Q_{\nu k}} \, \chi_{Q(c_P, 2^{i-j_P+c_0})} \right]^r\right) \mid \ell_{\frac{\q}{r}}^\phi(L_{\frac{\p}{r}}) \bigg\|\bigg\}^\frac1r\\
& \lesssim \bigg\{ \sum_{i=0}^{\infty} 2^{i(\alpha-M+a)r}\bigg\|\eta_{\nu, ar} \ast \left( \left[ \sum_{k \in \zn} w_\nu(2^{-\nu} k) \, |t_{Q_{\nu k}}| \, \chi_{Q_{\nu k}} \, \chi_{Q(c_P, 2^{i-j_P+c_0})} \right]^r\right) \mid \ell_{\frac{\q}{r}}^\phi(L_{\frac{\p}{r}}) \bigg\|\bigg\}^\frac1r\\
& \lesssim \bigg\{ \sum_{i=0}^{\infty} 2^{i(\alpha-M+a)r}\bigg\|\sum_{k \in \zn} w_\nu(2^{-\nu} k) \, |t_{Q_{\nu k}}| \, \chi_{Q_{\nu k}} \, \chi_{Q(c_P, 2^{i-j_P+c_0})} \mid \SeqB \bigg\|\bigg\}^\frac1r\\
& \lesssim \bigg\{ \sum_{i=0}^{\infty} 2^{i(\alpha-M+a+\log_2 \tilde{c}_1(\phi))r} \| t \mid \bphi\|^r \bigg\}^\frac1r\\
&\lesssim 1.
\end{align*}
Here $M$ is chosen large enough such that $M > \alpha +a +\log_2 \tilde{c}_1(\phi)$. We have then finished the proof.  

\end{proof}

\section{Pointwise multipliers}
	{Following \cite{Sch13} and also the corresponding results in the variable setting in \cite{GK16, GM18}, in this section we aim to provide a result on pointwise multipliers for the spaces under consideration. More precisely, let $\varphi$ be a bounded function on $\rn$. The question is under which conditions the mapping $f \mapsto \varphi \cdot f$ makes sense and generates a bounded operator in a given space $\Bphi$ or $\Fphi$. This can be answered in a very pleasant way using the non-smooth atomic decomposition from the previous section. For more references on the topic, we refer to \cite{GM18}.  
	
	
	We start by presenting two helpful results proved in \cite{Sch13}. The first shows that the product of two functions in $\mathscr{C}^s(\rn)$ is still a function in this space, as in \cite[Lemma 4.2]{Sch13}. In the second lemma is stated that the product of a non-smooth $[K,L]$-atom with a function $\varphi \in \mathscr{C}^{\rho}(\rn)$ is still a non-smooth $[K,L]$-atom, and represents a slight normalization of \cite[Lemma 4.3]{Sch13}.}

\begin{lemma} \label{lemma:pointwise-1} Let $s\geq 0$. There exists a constant $c>0$ such that for all $f, g \in  \mathscr{C}^s(\rn)$, the product $f \cdot g$ belongs to $\mathscr{C}^s(\rn)$ and it holds
	\begin{equation}
	\|f \cdot g \mid  \mathscr{C}^s(\rn) \| \leq c\, \|f \mid \mathscr{C}^s(\rn) \| \cdot \| g \mid \mathscr{C}^s(\rn) \|.  \nonumber
	\end{equation}
\end{lemma}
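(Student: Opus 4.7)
The case $s=0$ is immediate since $\mathscr{C}^0(\rn)=L_\infty(\rn)$ and the $L_\infty$-norm is submultiplicative. So assume $s>0$, and write $s=k+\sigma$ with $k=\lfloor s\rfloor^-\in\no$ and $\sigma=\{s\}^+\in(0,1]$. The Leibniz formula
$$
D^\alpha(fg)=\sum_{\beta\leq\alpha}\binom{\alpha}{\beta}\,D^\beta f\cdot D^{\alpha-\beta}g,\qquad |\alpha|\leq k,
$$
combined with the pointwise bound $\|D^\gamma f\|_\infty\leq \|f\mid C^k(\rn)\|$ for every $|\gamma|\leq k$ (and the analogous bound for $g$), takes care of the $C^k$-part of the norm and yields
$$
\|fg\mid C^k(\rn)\|\,\lesssim\,\|f\mid \mathscr{C}^s(\rn)\|\,\|g\mid \mathscr{C}^s(\rn)\|.
$$

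To handle the H\"older seminorm coming from $|\alpha|=k$, I would apply Leibniz once more and reduce matters to estimating the $\sigma$-H\"older seminorm $[u\,v]_\sigma$ on $\rn$ for each factorization $u=D^\beta f$, $v=D^{\alpha-\beta}g$ with $\beta\leq\alpha$. The product identity $u(x)v(x)-u(y)v(y)=v(x)(u(x)-u(y))+u(y)(v(x)-v(y))$ yields the standard bound
$$
[u\,v]_\sigma\,\leq\,\|v\|_\infty\,[u]_\sigma+\|u\|_\infty\,[v]_\sigma.
$$
The sup-norms are controlled by $\|f\mid C^k(\rn)\|$ and $\|g\mid C^k(\rn)\|$, and whenever $|\beta|=k$ the seminorm $[D^\beta f]_\sigma$ is precisely part of $\|f\mid\mathscr{C}^s(\rn)\|$ (similarly on the $g$ side).

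The main technical point, where I expect the only real work, is the case $|\beta|<k$: then $D^\beta f$ is not \emph{a priori} H\"older of order $\sigma$. However, since its order is strictly less than $k$, it possesses bounded first-order partials with norm controlled by $\|f\mid C^k(\rn)\|$, and therefore is globally Lipschitz on $\rn$. I would promote this to a global $\sigma$-H\"older estimate by splitting into scales: for $|x-y|\leq 1$ use the Lipschitz bound together with $|x-y|\leq |x-y|^\sigma$, while for $|x-y|>1$ use the trivial estimate $|D^\beta f(x)-D^\beta f(y)|\leq 2\|D^\beta f\|_\infty\leq 2\|D^\beta f\|_\infty\,|x-y|^\sigma$. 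This gives $[D^\beta f]_\sigma\lesssim \|f\mid\mathscr{C}^s(\rn)\|$, and symmetrically for $g$. Summing over $\beta\leq\alpha$ and over the finitely many $\alpha$ with $|\alpha|=k$, and combining with the $C^k$-estimate from the first paragraph, produces the claimed multiplicative bound.
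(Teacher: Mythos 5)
Your proof is correct. The paper does not give its own argument for this lemma; it is quoted directly from \cite[Lemma~4.2]{Sch13}. Your proof --- Leibniz rule for the $C^k$-part, the product estimate $[uv]_\sigma \leq \|v\|_\infty\,[u]_\sigma + \|u\|_\infty\,[v]_\sigma$ for the top-order H\"older seminorm, and the scale-splitting device (Lipschitz bound for $|x-y|\leq 1$, sup-norm bound for $|x-y|>1$) to conclude that lower-order derivatives of a $\mathscr{C}^s$-function are globally $\sigma$-H\"older with seminorm $\lesssim \|f\mid C^k(\rn)\|$ --- is the standard elementary argument and coincides with the route taken in the cited reference. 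Correctly identified: the scale-splitting step is the only point requiring genuine care, since a globally Lipschitz function on $\rn$ is \emph{not} globally $\sigma$-H\"older without the additional $L_\infty$-control at large scales, and you supply exactly that.
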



\begin{lemma} \label{lemma:pointwise-2}
	There exists a constant $c$ with the following property: for all $\nu \in \nat_0$, $m \in \zn$, all non-smooth $[K,L]$-atoms $a_{\nu, m}$ with support in $3\, Q_{\nu, m}$ and all $\varphi \in \mathscr{C}^{\rho}(\rn)$ with $\rho\geq \max(K, L)$, the product
	$$c\, \|\varphi \mid \mathscr{C}^{\rho}(\rn)\|^{-1}\cdot \varphi\cdot a_{\nu, m}$$
	is a non-smooth $[K,L]$-atom with support in $3\, Q_{\nu, m}$.
\end{lemma}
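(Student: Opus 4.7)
My plan is to verify the three defining conditions \eqref{NSA-cond1}, \eqref{NSA-cond2}, \eqref{NSA-cond3} of a non-smooth $[K,L]$-atom for the normalized product $\tilde{a}:=c\,\|\varphi\mid\mathscr{C}^{\rho}(\rn)\|^{-1}\,\varphi\cdot a_{\nu,m}$, choosing the universal constant $c>0$ at the very end large enough to absorb all multiplicative factors that appear along the way. The support condition \eqref{NSA-cond1} is trivial, since multiplication by $\varphi$ cannot enlarge the support of $a_{\nu,m}$, which by assumption already lies in $3Q_{\nu,m}$.

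For the scaled smoothness condition \eqref{NSA-cond2} I would pass to the rescaled picture via $x\mapsto 2^{-\nu}x$ and factor $(\varphi\cdot a_{\nu,m})(2^{-\nu}\,\cdot)=\varphi(2^{-\nu}\,\cdot)\cdot a_{\nu,m}(2^{-\nu}\,\cdot)$. Then Lemma \ref{lemma:pointwise-1} with $s=K$ gives the multiplicativity of the $\mathscr{C}^{K}(\rn)$-norm on the product, and it only remains to control $\|\varphi(2^{-\nu}\,\cdot)\mid\mathscr{C}^{K}(\rn)\|$ uniformly in $\nu$. Since $\nu\ge 0$ the dilation is a compression, so each integer derivative of $\varphi(2^{-\nu}\,\cdot)$ acquires the benign factor $2^{-\nu|\alpha|}$ and the Hölder seminorm an additional $2^{-\nu\{K\}^{+}}$; thus $\|\varphi(2^{-\nu}\,\cdot)\mid\mathscr{C}^{K}(\rn)\|\le\|\varphi\mid\mathscr{C}^{K}(\rn)\|\le\|\varphi\mid\mathscr{C}^{\rho}(\rn)\|$, where in the last step I use the elementary embedding $\mathscr{C}^{\rho}(\rn)\hookrightarrow\mathscr{C}^{K}(\rn)$ valid because $\rho\ge K$. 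Combined with the atom property $\|a_{\nu,m}(2^{-\nu}\,\cdot)\mid\mathscr{C}^{K}(\rn)\|\le 1$, this yields an estimate of the form $c_1\,\|\varphi\mid\mathscr{C}^{\rho}(\rn)\|$, which is neutralized by the prefactor $c\,\|\varphi\mid\mathscr{C}^{\rho}\|^{-1}$ as soon as $c\ge c_{1}$.

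For the cancellation condition \eqref{NSA-cond3} the idea is simply to transfer the multiplication by $\varphi$ from the atom onto the test function. Given any $\psi\in\mathscr{C}^{L}(\rn)$, I would write $\int_{\rn}\psi\,\tilde{a}\,dx=c\,\|\varphi\mid\mathscr{C}^{\rho}\|^{-1}\int_{\rn}(\psi\cdot\varphi)\,a_{\nu,m}\,dx$ and then apply the moment inequality of the non-smooth atom $a_{\nu,m}$ to the new test function $\psi\cdot\varphi$. Lemma \ref{lemma:pointwise-1} with $s=L$ together with the embedding $\mathscr{C}^{\rho}(\rn)\hookrightarrow\mathscr{C}^{L}(\rn)$ (again because $\rho\ge L$) gives $\|\psi\cdot\varphi\mid\mathscr{C}^{L}(\rn)\|\lesssim\|\psi\mid\mathscr{C}^{L}(\rn)\|\,\|\varphi\mid\mathscr{C}^{\rho}(\rn)\|$, so that after inserting the defining estimate \eqref{NSA-cond3} for $a_{\nu,m}$ the factor $\|\varphi\mid\mathscr{C}^{\rho}\|$ cancels exactly against the normalization, leaving a bound of the desired shape $\lesssim 2^{-\nu(L+n)}\|\psi\mid\mathscr{C}^{L}(\rn)\|$.

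The only minor obstacle is bookkeeping: I must pick a single constant $c$ that serves both \eqref{NSA-cond2} and \eqref{NSA-cond3} uniformly in $\nu$, $m$, in the atom $a_{\nu,m}$ and in $\varphi$. This is achieved by taking $c$ to be the maximum of the two multiplicative constants produced by Lemma \ref{lemma:pointwise-1} at the smoothness levels $K$ and $L$, multiplied by the constant implicit in the embeddings $\mathscr{C}^{\rho}\hookrightarrow\mathscr{C}^{K},\mathscr{C}^{L}$. The one genuinely subtle point is that the argument in the second step relies on working on the rescaled side, where $\nu\ge 0$ forces the dilation to be a contraction; this is precisely the reason why condition \eqref{NSA-cond2} in the definition of a non-smooth atom is formulated for $a_{\nu,m}(2^{-\nu}\,\cdot)$ rather than for $a_{\nu,m}$ itself.
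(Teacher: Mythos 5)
Your proof is correct. In fact, the paper does not prove this lemma at all; it simply states that it ``represents a slight normalization of \cite[Lemma 4.3]{Sch13}'' and refers the reader there. Your argument --- verify \eqref{NSA-cond1} trivially from the support of $a_{\nu,m}$; verify \eqref{NSA-cond2} by factoring the rescaled product, applying Lemma \ref{lemma:pointwise-1} with $s=K$, and exploiting that dilation by $2^{-\nu}$ with $\nu\ge 0$ does not increase the $\mathscr{C}^K$-norm together with the embedding $\mathscr{C}^\rho\hookrightarrow\mathscr{C}^K$; verify \eqref{NSA-cond3} by moving $\varphi$ onto the test function $\psi$, applying Lemma \ref{lemma:pointwise-1} with $s=L$ and the embedding $\mathscr{C}^\rho\hookrightarrow\mathscr{C}^L$, so that the $\|\varphi\mid\mathscr{C}^\rho\|$ factor cancels against the normalization --- is exactly the expected direct verification and matches the structure of Scharf's original argument. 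The observation you flag at the end, namely that the contractive nature of the dilation is what makes the rescaled smoothness condition \eqref{NSA-cond2} stable under multiplication, is indeed the point of formulating the condition in rescaled form, and your bookkeeping of the universal constant $c$ (a maximum over the multiplicative constants from Lemma \ref{lemma:pointwise-1} at levels $K$ and $L$ and the two embedding constants) is sound.
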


The main result of this section can now be presented. We skip the proof, once it can be carried out as the proof of \cite[Theorem 4.3]{GK16}.
\begin{theorem} 
	Let $p$, $q$, $s$, $\phi$ as in Definition \ref{def-spaces}.    
	\begin{list}{}{\labelwidth1.3em\leftmargin2.3em}
		
		\item[{\upshape (i)\hfill}] Let 
		$$\rho>\max\left\{\alpha_2 ,\alpha_2 +\log_2 \tilde{c}_1(\phi),  \frac{n}{\min\{1, p^-\}} -n - \alpha_1\right\}.$$ 
		Then there exists a positive number $c$ such that
		$$
		\|\varphi \cdot f\mid \Bphi\| \leq c\, \,\|\varphi \mid\mathscr{C}^{\rho}(\rn)\|  \cdot \| f\mid \Bphi\| 
		$$
		for all $\varphi \in \mathscr{C}^{\rho}(\rn)$ and all $f\in \Bphi$.
		
		\item[{\upshape (ii)\hfill}] Let 
		$$\rho>\max\left\{\alpha_2 ,\alpha_2 +\log_2 \tilde{c}_1(\phi),  \frac{n}{\min\{1, p^-, q^-\}} -n - \alpha_1\right\}.$$ 
		Then there exists a positive number $c$ such that
		$$
		\|\varphi\cdot f\mid \Fphi\| \leq c\, \,\|\varphi \mid\mathscr{C}^{\rho}(\rn)\|  \cdot \| f\mid \Fphi\| 
		$$
		for all $\varphi \in \mathscr{C}^{\rho}(\rn)$ and all $f\in \Fphi$.
	\end{list}
\end{theorem}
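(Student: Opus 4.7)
The plan is to combine the non-smooth atomic decomposition from Theorems \ref{Thm-NonSmooth-F} and \ref{Thm-NonSmooth-B} with the stability property of non-smooth atoms under multiplication by a H\"older function (Lemma \ref{lemma:pointwise-2}). Concretely, given $\rho$ as in the hypothesis, I would first pick $K, L \in [0,\rho]$ with
$$K > \alpha_2 + \max\{0, \log_2 \tilde{c}_1(\phi)\} \quad \text{and} \quad L > \frac{n}{\min\{1,p^-\}} - n - \alpha_1$$
in the $B$-case (replacing $p^-$ by $\min\{p^-,q^-\}$ in the $F$-case). Such a choice is possible precisely because $\rho$ strictly exceeds each of the three quantities in the max appearing in the theorem; in particular, $\rho \geq \max(K,L)$, so Lemma \ref{lemma:pointwise-2} applies with this $\rho$.

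Next, given $f \in \Bphi$, I would invoke the direction (ii) of Theorem \ref{Thm-NonSmooth-B} to decompose
$$f = \sum_{Q \in \mathcal{Q}^*} t_Q a_Q \quad \text{in } \SSn,$$
where $\{a_Q\}$ is a family of $[K,L]$-non-smooth atoms and $\|t \mid \bphi\| \lesssim \|f \mid \Bphi\|$. Multiplying by $\varphi \in \mathscr{C}^{\rho}(\rn)$ formally yields
$$\varphi \cdot f = \sum_{Q \in \mathcal{Q}^*} \bigl(t_Q \, \|\varphi \mid \mathscr{C}^{\rho}(\rn)\|\bigr) \cdot \bigl(\|\varphi \mid \mathscr{C}^{\rho}(\rn)\|^{-1} \, \varphi \, a_Q\bigr).$$
By Lemma \ref{lemma:pointwise-2}, up to the fixed constant $c$, each function $\|\varphi \mid \mathscr{C}^{\rho}(\rn)\|^{-1} \varphi \, a_Q$ is again a $[K,L]$-non-smooth atom supported in $3Q$, so this is a valid non-smooth atomic representation of $\varphi \cdot f$ with coefficient sequence $\widetilde t_Q := c^{-1} \|\varphi \mid \mathscr{C}^{\rho}(\rn)\| \, t_Q$. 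In particular,
$$\|\widetilde t \mid \bphi\| \lesssim \|\varphi \mid \mathscr{C}^{\rho}(\rn)\| \cdot \|t \mid \bphi\|.$$

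Applying the direction (i) of Theorem \ref{Thm-NonSmooth-B} to this decomposition, the series converges in $\SSn$ to an element of $\Bphi$ with
$$\|\varphi \cdot f \mid \Bphi\| \lesssim \|\widetilde t \mid \bphi\| \lesssim \|\varphi \mid \mathscr{C}^{\rho}(\rn)\| \cdot \|f \mid \Bphi\|,$$
which is the desired estimate. Part (ii) follows by the same scheme, replacing Theorem \ref{Thm-NonSmooth-B} by Theorem \ref{Thm-NonSmooth-F} and $\min\{1, p^-\}$ by $\min\{1, p^-, q^-\}$.

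The only delicate point is checking that the threshold on $\rho$ genuinely permits a simultaneous choice of $K$ and $L$ obeying both the atomic-decomposition hypotheses and the constraint $\max(K,L) \leq \rho$ demanded by Lemma \ref{lemma:pointwise-2}; this is essentially bookkeeping, since the three terms in the max defining the lower bound on $\rho$ exactly match the three conditions $K > \alpha_2$, $K > \alpha_2 + \log_2 \tilde{c}_1(\phi)$ and $L > n/\min\{1,p^-\} - n - \alpha_1$ (and similarly for $F$). Once this is arranged, everything reduces to combining the two lemmas with the atomic characterization, and no further analytic work is required.
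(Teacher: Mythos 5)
Your proposal is correct and is essentially the argument the paper has in mind: the paper skips the proof with a reference to the corresponding result in \cite{GK16}, which follows exactly the scheme you describe (decompose $f$ via the non-smooth atomic characterization, absorb $\varphi$ into the atoms using Lemma \ref{lemma:pointwise-2}, and reassemble via the convergence half of the atomic theorem). Your bookkeeping on $K,L,\rho$ is right, since $\alpha_2 + \max\{0,\log_2\tilde c_1(\phi)\} = \max\{\alpha_2,\ \alpha_2+\log_2\tilde c_1(\phi)\}$ and hence the three terms in the lower bound on $\rho$ exactly reproduce the thresholds for $K$ and $L$ in Theorems \ref{Thm-NonSmooth-B} and \ref{Thm-NonSmooth-F}, so one may simply take $K=L=\rho$.
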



\end{document}